\documentclass[12pt]{article}
\linespread{1.3}
\usepackage{enumerate}

\usepackage{amsthm}
\usepackage{bbm}
\usepackage{amsfonts}
\usepackage{mathrsfs}
\usepackage{amsmath}
\usepackage{fancyhdr}
\usepackage{hyperref}
\usepackage{color} 
\usepackage{mathtools}
\usepackage{stackrel,amssymb}
\usepackage{cancel}

\usepackage{float}

\usepackage{enumitem}
\usepackage{graphicx}
\usepackage{titlesec}
\usepackage{bigints}
\usepackage{relsize,exscale}
\usepackage{romannum}
\usepackage{multirow}
\usepackage{subcaption}
\usepackage[nottoc,numbib]{tocbibind}
\usepackage{tikz}
\usepackage{subcaption}
\usetikzlibrary{decorations.markings}
\usetikzlibrary{arrows}

\usetikzlibrary{trees}

\usepackage{arxiv}
\usepackage{hyperref} 
\usepackage{bm}
\usepackage{xcolor}
\usepackage{framed}
\usepackage{cleveref}

\setlength{\parindent}{0in}
\theoremstyle{plain}
\newtheorem{theorem}{Theorem}
\newtheorem{Lemma}{Lemma}   
\newtheorem{prop}{Proposition}   

\theoremstyle{definition}

\theoremstyle{newremark}
\newtheorem{remark}{Remark}

\newtheorem{claim}{Claim}



\newcommand{\s}{\sigma}

\newcommand{\x}{\mathbf{x}}

\renewcommand{\a}{\alpha}
\newcommand{\e}{\epsilon}

 
\pagestyle{fancy}
\fancyheadoffset{0in}

\title{Linear-Quadratic Stochastic Differential Games on  Random Directed Networks}
\fancyhf{}
\fancyfoot[C]{\thepage}
\fancypagestyle{plain}{%
  \fancyhead{} 
}

\author{
\and  Yichen Feng\thanks{Department of Statistics and Applied Probability, South Hall, University of California, Santa Barbara, CA 93106, USA (E-mail: \href{mailto:feng@pstat.ucsb.edu}{feng@pstat.ucsb.edu}).} 
  \and Jean-Pierre Fouque\thanks{Department of Statistics and Applied Probability, South Hall, University of California, Santa Barbara, CA 93106, USA (E-mail: \href{mailto:fouque@pstat.ucsb.edu}{fouque@pstat.ucsb.edu}). Work supported by NSF grant DMS-1814091.} 
 \and Tomoyuki Ichiba\thanks{Department of Statistics and Applied Probability, South Hall, University of California, Santa Barbara, CA 93106, USA (E-mail: \href{mailto:ichiba@pstat.ucsb.edu}{ichiba@pstat.ucsb.edu}). Work supported by NSF grant DMS-1615229.} 
  }
\date{\vspace{-5ex}}

\begin{document}

\maketitle

\pagenumbering{arabic}

\begin{abstract}

The study of linear-quadratic stochastic differential games on directed networks was initiated in Feng, Fouque \& Ichiba \cite{fengFouqueIchiba2020linearquadratic}. In that work, the game on a directed chain with finite or infinite players was defined as well as the game on a deterministic directed tree, and their Nash equilibria were computed. The current work continues the analysis by first developing a random directed chain structure by assuming the interaction between every two neighbors is random. We solve explicitly for an open-loop Nash equilibrium for the system and we find that the dynamics under equilibrium is an infinite-dimensional Gaussian process described by a Catalan Markov chain introduced in \cite{fengFouqueIchiba2020linearquadratic}. The discussion about stochastic differential games is extended to a random two-sided directed chain and a random directed tree structure. 
\end{abstract}

\noindent{{\it Key Words and Phrases:} Linear-quadratic stochastic games, random directed chain network, Nash equilibrium.}

\noindent{\it AMS 2010 Subject Classifications:} 91A15, 60H30


\section{Introduction}
Stochastic differential games on networks have been studied widely with great interest in recent years. The present paper about stochastic differential games on random directed networks is a continuation of our work in Feng, Fouque \& Ichiba \cite{fengFouqueIchiba2020linearquadratic}, which mainly studies linear-quadratic stochastic differential games on deterministic directed chains. 

In stochastic differential games on directed networks, the state processes of all players are described by a stochastic differential system. Each player is interacting with other players through its cost function and the aim of every player is to minimize this cost function by controlling its state. Roughly speaking, the state process of one player is associated with a vertex of the network graph. When the graph is directed and if there is an arrow from $j$ to $i$, the cost function of player $i$ depends on the state process of player $j$. Furthermore, when the graph is random, the cost function of player $i$ depends on the state process of player $j$ with some probability of the presence of an arrow from $j$ to $i$.

The goal of studying the stochastic differential game problem on networks is to determine and analyze the Nash equilibria of the game for different types of networks. There are  two extreme types of networks describes as follows. 

On one hand, we can consider a fully connected network with interaction of mean-field type, described in Figure \ref{fig:chain}(a). When the number of players goes to infinity, with appropriate scaling, this kind of game can be approximated by a mean field game. This approximation problem by mean field games has been widely discussed, for instance in Lasry and Lions \cite{LASRY2006619, LASRY2006679, Lasry2007} and Lacker \cite{lacker2020}. Stochastic games on infinite random networks have been proposed and studied. For instance, Delarue \cite{delarue:hal-01457409} discussed a simple toy model with a large number of players in mean field interaction when the graph connection between them is not complete but is of Erd\H{o}s-R\'enyi type.
Recently, Caines and Huang \cite{CainesHuang8619367, CainesHuang9029871} investigated Graphon Mean Field Games which relate infinite population equilibria on infinite networks to finite population equilibria on finite networks.

On the other hand, the network can be very sparse, structured network. Detering, Fouque \& Ichiba \cite{Nils-JP-Ichiba2018DirectedChain} studied a particle system interacting through a one-dimensional directed chain structure without the game aspect. Then Feng, Fouque \& Ichiba \cite{fengFouqueIchiba2020linearquadratic} investigated linear-quadratic games on a finite, directed chain of $N$ vertices Figure \ref{fig:chain}(b), where there are arrows from $i+1$ to $i$ for $i = 1,...,N-1$ and a boundary condition at the vertex $N$. There are only $N-1$ directed edges in the network in contrast to the fully connected graph, where there are ${N\choose 2}$ undirected edges. It is a complete opposite situation to the mean field games since each player interacts only with its neighbor in a given direction on a directed chain network. 

The objective of our paper is to investigate linear-quadratic stochastic differential games on random directed networks and to find their open-loop Nash equilibria explicitly in a similar spirit of the work by Carmona, Fouque and Sun \cite{CarmonaFouqueSunSystemicRisk}. 
We propose first a stochastic game on a random directed chain network shown in Figure \ref{figure:randomchain}. Then, we generalize the result to  stochastic differential games on a random two-sided directed chain and  on a random directed tree structure as two extensions of random directed chain graphs. In this framework, the graph represents interactions among players through the cost functions but not necessarily reflects physical (spatial) distance among players. The notion of neighbor refers to the presence of a link (edge) in the graph.

The paper is organized as follows. In Section \ref{section random chain}, we study a stochastic game with infinite players on a directed chain structure and construct an open-loop Nash equilibrium of the system. We assume that the interaction between two neighbor is random but frozen in time and {\it i.i.d.} among all the successive pairs of neighbors. Section \ref{section 3} is devoted to the analysis of an extension of Section \ref{section random chain}, which considers a game for countably many players with random double-sided interactions and studies the effect of random double-sided interactions on the open-loop Nash equilibrium. We extend our results to a directed tree structure with random interactions between players in the neighboring generations in Section \ref{section-tree-model}. We conclude in Section \ref{conclusion} and Appendix \ref{Appendix} includes some technical proofs and discussions.

\begin{figure}[!h]
\centering
\begin{minipage}[t]{4.5cm}
\begin{tikzpicture}[scale=1.4, cap=round, >={triangle 60}]
\draw[] (0, 0) circle (1cm);
\foreach \x in {0,30,..., 360} {
	\filldraw[black] (\x: 1cm) circle(0.05cm);
	\foreach \y in {0,30,...,360}{
		\draw (\x: 1cm) -- (\y: 1cm); 
	}
}	
\draw (1.4,0) node[]{\small $3$};
\draw (1.21244,0.7) node[]{\small $2$};
\draw (0.7, 1.21244) node[]{\small $1$};
\draw (0, 1.4) node[]{\small $N$};
\draw (-0.5, 1.21244) node[left]{\small $N-1$};
\end{tikzpicture}
\subcaption{ }
\end{minipage}
\hspace{3cm}
\begin{minipage}[t]{4cm}
\begin{tikzpicture}
[scale=1,mydashed/.style={dashed},>={triangle 60}]
\draw[thick] (0,0.1) -- (0,0) node[label=above:1] {};
\draw[thick] (1,0.1) -- (1,0) node[label=below:2] {};
\draw[thick] (2,0.1) -- (2,0) node[label=above:3] {};
\draw[thick] (4,0.1) -- (4,0)--(5,0) -- (5,0.1);
\draw[thick,<-] (0,0) -- (1,0);
\draw[thick,<-] (1,0) -- (2,0);
\draw[thick,mydashed,<-] (2,0) -- (4,0) node[label=below:{\small $N-1$}] {};
\draw[thick,<-] (4,0) -- (5,0) node[label=above:{$N$}] {};
\end{tikzpicture}
\subcaption{ }
\end{minipage} 

\caption{(a) Fully connected graph, (b) Finite directed chain graph.}\label{fig:chain}
\end{figure}
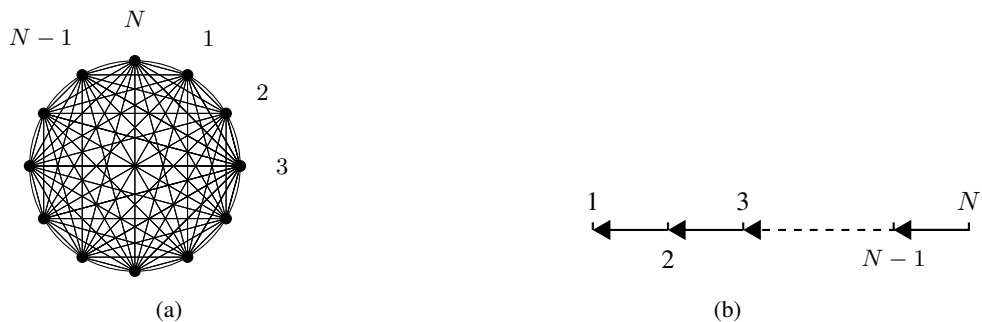

\newpage

\section{Random Directed Chain Game}\label{section random chain}
\subsection{Setup and Assumptions}
In Feng, Fouque \& Ichiba \cite{fengFouqueIchiba2020linearquadratic}, we have studied a stochastic game with infinite players on a directed chain structure and found an open-loop Nash equilibrium of the system. In this paper, we are still looking at an infinite-player system but assuming the interaction between every two neighbors is random as follows. We introduce a binary random variable $R_n$ which represents the random interaction between player $n$ and $n+1$. The $\{R_n,\,n\geq 1\}$ are independent and identically distributed random variables taking values in $\{0,1\}$ with probabilities $p_0$ and  $p_1=1-p_0$. 
When $R_n$ is zero, we assume player $n$ has no interaction with player $n+1$. An example of the chain structure is shown in Figure \ref{figure:randomchain}.

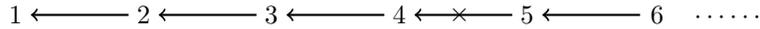
\begin{figure}[!h]
	\centering
		\begin{tikzpicture}[scale=1,mydashed/.style={dashed,dash phase=3pt}]
\draw(1.8,0) node{$1$};
\draw[thick, <-] (2,0) -- (3.3,0); 
\draw(3.5,0) node{$2$};
\draw[thick,<-] (3.7,0) -- (5,0);
\draw(5.2,0) node{$3$};
\draw[thick,<-] (5.4,0) -- (6.7,0);
\draw(6.9,0) node{$4$};
\draw[thick,<-] (7.1,0) -- (8.4,0);
\draw(7.7,0) node{$\times$};
\draw(8.6,0) node{$5$};
\draw[thick,<-] (8.8,0) -- (10.1,0);
\draw(11,0) node{$6\quad\cdots\cdots$};
\end{tikzpicture}
	\caption{Example of a Random Directed Chain: $R_1=R_2=R_3=R_5=1$; $R_4=0$}
	\label{figure:randomchain}
\end{figure}

We assume the dynamics of the states of all players are given by the stochastic differential equations of the form: for $i\geq 1$
 \begin{equation}\label{eq:1}
    dX_t^i=\a_t^i {\mathrm d}t+\s {\mathrm d}W_t^i,\quad 0\leq t\leq T,
\end{equation}
where $ (W_t^i)_{0\leq t\leq T},\, i \geq 1$ are one-dimensional independent standard Brownian motions. Here and throughout the paper, the argument in the superscript represents index or label but not the power. For simplicity, we assume that the diffusion is one-dimensional and the diffusion coefficients are constant and identical denoted by $\sigma>0$. The drift coefficients $\alpha^i$'s are adapted to the filtration of the Brownian motions and satisfy $\mathbbm{E}[\int_0^T |\alpha_t^i|^2 dt]<\infty$ for $i \geq 1$. The system starts at time $t = 0$ from $i.i.d.$ square-integrable random variables $X_0^i = \xi_i$, independent of the Brownian motions and, without loss of generality,  we assume ${\mathbbm E}(\xi_i) = 0$ for $i \geq 1$. 

In this model, each player $i$ chooses its own strategy $\alpha^i$, in order to minimize its objective function given by:
\begin{align*}
J^{i}(\boldsymbol{\alpha})=&\mathbbm{E}_{X,R} \bigg\{ \displaystyle \int_0^T \big(\frac{1}{2}(\a^{i}_t)^2
+\frac{\e}{2}(X_t^{i+R_i}-X_t^{i})^2\big) {\mathrm d}t+\frac{c}{2}(X_T^{i+R_i}-X_T^{i})^2\bigg\}\\
=&\mathbbm{E}_{X} \bigg\{ \displaystyle \int_0^T \big(\frac{1}{2}(\a^{i}_t)^2
+\frac{\e}{2}(X_t^{i+1}-X_t^i)^2\cdot p_1\big) {\mathrm d}t+\frac{c}{2}(X_T^{i+1}-X_T^i)^2\cdot p_1\bigg\},
\end{align*}
for some constants $\e>0$, $c\geq 0$ and $\boldsymbol{\alpha}=(\alpha^1,\alpha^2,\ldots)$ with $\alpha^i\in \mathbb{R}$. According to the objective function, if a player is not in interaction with its right neighbor, then we assume she has no incentive to do anything. This is a \emph{Linear-Quadratic} differential game on a directed chain network, since the state $X^{i}$ of each player $i$ interacts only with $X^{i+1}$ of player $i+1$ through the quadratic cost function for $i \geq 1$. 

\begin{remark}
When every player is connected with the next one, i.e. $p_1=1$, we get back to the stochastic game on a directed chain structure, studied in Feng, Fouque \& Ichiba \cite{fengFouqueIchiba2020linearquadratic}.
\end{remark}

\subsection{Open-Loop Nash Equilibrium} 
In this section, we search for an open-loop Nash equilibrium of the system among the admissible strategies $\{\alpha_t^i,i\geq 1, t \in [0, T] \}$. We construct the equilibrium by using the Pontryagin stochastic maximum principle (see \cite{carmona2013forwardbackward} for stochastic maximum principle in the context of mean-field games).

The corresponding Hamiltonian for player $i$ is given by:
\begin{align}\label{eq: Hamiltonian random game}
H^{i}(x^1,x^2,\cdots,y^{i,1},\cdots, y^{i,n_i},\a^1,\a^2,\cdots)=\sum_{k=1}^{n_i} \a^{k}y^{i,k}
+\frac{1}{2}(\a^{i})^2+
\frac{\e}{2}(x^{i+1}-x^{i})^2\cdot p_1,
\end{align}
assuming it is defined on real numbers $x^i, y^{i,k}, \alpha^i, i \geq 1, k \geq 1$, where only finitely many $y^{i,k}$ are non-zero for every
given $i$. Here, $n_i$ is a finite number depending on $i$ with $n_i > i$. This assumption is checked in Remark \ref{finite-check} below. Thus, the Hamiltonian $H^i$ is well defined for $i \geq 1$.

The value of $\hat\alpha^i$ minimizing the Hamiltonian $H^{i}$ with respect to $\alpha^i$, when all the other variables including $\alpha^j$ for $j\neq i$ are fixed, is given by the first order condition 
\[
    \partial_{\alpha^i}H^i=y^{i,i}+\alpha^i=0 \quad \text{leading to the choice:} \quad  \Hat{\alpha}^i=-y^{i,i}.
\]

The adjoint processes $Y_t^i=(Y_t^{i,j};j=1,\ldots,n_i)$ and $Z_t^i=(Z_t^{i,j,k};1\leq j\leq n_i,k\geq 1)$ for $i\geq 1$ are defined as the solutions of the system of  backward stochastic differential equations (BSDEs): for $i\geq 1$, $1\leq j\leq n_i$
\begin{equation} 
\left\{
  \begin{split}
    {\mathrm d}Y_t^{i,j}
    &=-\partial_{x^j}H^i(X_t,Y_t^i,\alpha_t){\mathrm d}t+\sum\limits_ {k=1}^\infty Z_t^{i,j,k}{\mathrm d}W_t^k=-p_1\cdot \epsilon (X_t^{i+1}-X_t^i)(\delta_{i+1,j}-\delta_{i,j}){\mathrm d}t+\sum\limits_ {k=1}^\infty Z_t^{i,j,k}{\mathrm d}W_t^k, \\
    Y_T^{i,j}&=\partial_{x^j}g_i(X_T)=p_1\cdot c(X_T^{i+1}-X_T^i)(\delta_{i+1,j}-\delta_{i,j});
      \end{split}
      \right.
\end{equation}
for $0 \leq t \leq T$. Particularly, for $j=i$ and $j = i+1$, it becomes:
\begin{equation}\label{BSDE-general}
\left\{
  \begin{split}
    & {\mathrm d} Y_t^{i,i} =p_1\cdot\epsilon \,(X_t^{i+1}- X_t^i)\,{\mathrm d}t+\sum\limits_ {k=1}^\infty Z_t^{i,i,k}{\mathrm d}W_t^k, \quad &Y_T^{i,i}=-p_1\cdot c \,(X_T^{i+1}-X_T^i) , \\
    & {\mathrm d} Y_{t}^{i,i+1} \, =\,  -p_1\cdot \epsilon (X_t^{i+1}-X_t^i){\mathrm d}t+\sum\limits_ {k=1}^\infty Z_t^{i,i+1,k}{\mathrm d}W_t^k,\quad &Y_T^{i,i+1}= p_1\cdot c(X_T^{i+1}-X_T^i) .
 \end{split}
\right.
\end{equation}

\begin{remark} \label{finite-check}
When $j\neq i, i+1$, ${\mathrm d}Y_{t}^{i,j} \, =\,  \sum\limits_{k=1}^\infty Z_{t}^{i,j,k} {\mathrm d} W_{t}^{k}$ and $ Y_{T}^{i,j} \, =\,  0$, which gives $Z_{t}^{i,j,k} \equiv 0 ,  \, 0 \le t \le T\,$ for all $k$. Thus $Y_{t}^{i,j} \equiv 0 \,,\,0 \le t \le T\,$ for all $j\neq i, i+1$. There must be finitely many non-zero $Y^{i,j}$'s for every $i$. Hence, the Hamiltonian $H^{i}$ in \eqref{eq: Hamiltonian random game} can be rewritten as
\begin{equation*}
    H^i(x^1,x^2,\cdots,y^{i,i},y^{i,i+1},\a^1,\a^2,\cdots)= \a^i y^{i,i}+\a^{i+1} y^{i,i+1}+\frac{1}{2}(\a^i)^2+\frac{\e}{2}(x^{i+1}-x^i)^2\cdot p_1.
\end{equation*} 
We also note that $Y_{t}^{i,i+1}=-Y_{t}^{i,i}$ and $Z_t^{i,i+1,k}=-Z_t^{i,i,k}$, so that it's enough to find $Y_{t}^{i,i}$.
\end{remark}

\bigskip
Considering the BSDE system and its terminal condition, we  make an ansatz of the form: 
\begin{equation}\label{ansatz-general}
    Y_t^{i,i}=\sum\limits_{j=i}^\infty \phi_t^{i,j}X_t^j, \quad 0\leq t\leq T
\end{equation}
for some deterministic scalar functions $\phi_t$ satisfying the terminal conditions: $\phi_T^{i,i}=p_1c, \,\phi_T^{i,i+1}=-p_1c,$ and $\phi_T^{i,j}=0$ for $j\geq i+2$.

Substituting the ansatz, the optimal strategy $ \hat{\alpha}^i$ and the controlled forward equation for $X^i$ in (\ref{eq:1}) become
\begin{equation} \label{eq: OL-Nash1}
    \left\{
  \begin{array}{ll}
    &\hat{\alpha}_t^i=-Y_t^{i,i}=-\sum\limits_{j=i}^\infty \phi_t^{i,j}X_t^j,\\
    & {\mathrm d}X_t^j=-\sum\limits_{k=j}^\infty \phi_t^{j,k}X_t^k {\mathrm d}t+\sigma {\mathrm d}W_t^j.
  \end{array}
\right.
\end{equation}
Differentiating the ansatz (\ref{ansatz-general}) and substituting (\ref{eq: OL-Nash1}) leads to:
\begin{equation}\label{ito-general}
  \begin{array}{ll}
    {\mathrm d}Y_t^{i,i}&=\sum\limits_{j=i}^\infty [X_t^j\Dot{\phi}_t^{i,j} {\mathrm d}t+\phi_t^{i,j}dX_t^j]\\
   &= \sum\limits_{k=i}^\infty \big(\Dot{\phi}_t^{i,k}- \sum\limits_{j=i}^k \phi_t^{i,j}\phi_t^{j,k} \big) X_t^k{\mathrm d}t
   +\sigma\sum\limits_{k=i}^\infty \phi_t^{i,k}{\mathrm d}W_t^k.
  \end{array}
\end{equation}
Here $\dot{\phi}_t$ represents the time derivative of $\phi_t$. Comparing the martingale terms and drift terms of the two It\^o's decompositions  (\ref{BSDE-general}) and (\ref{ito-general}) of $Y_t^{i,i}$, 
the martingale terms give the deterministic (and therefore adapted) processes 
$Z_t^{i,i,k}$:
\begin{equation}\label{random chain martingale}
    Z_t^{i,i,k}=0 \quad \text{ for } \quad k<i, \quad \text{ and }\ \quad Z_t^{i,i,k}=\sigma\phi_t^{i,k} \quad \text{ for } \quad k\geq i.
\end{equation}

Moreover, the drift terms show that the functions $\phi_t$ must satisfy the system of Riccati equations :\\
\begin{equation}\label{random chain riccati}
\begin{array}{lrll}
     &\Dot{\phi}_t^{i,i}&=\phi_t^{i,i}\cdot \phi_t^{i,i}-p_1\cdot \epsilon, &\phi_T^{i,i}=p_1\cdot c, \\
      &\Dot{\phi}_t^{i,i+1}&=\phi_t^{i,i}\cdot \phi_t^{i,i+1}+\phi_t^{i,i+1}\cdot \phi_t^{i+1,i+1}+p_1\cdot\epsilon, &\phi_T^{i,i+1}=-p_1\cdot c,\\
     \ell\geq i+2:\quad &\Dot{\phi}_t^{i,\ell}
     &=\sum\limits_{j=i}^l \phi_t^{i,j}\phi_t^{j,\ell} , & \phi_T^{i,\ell}=0,
\end{array}
\end{equation}

The Riccati system is solvable and the solutions only depend on the ``distance" $\ell-i$. Thus, if we define $\phi_t^{j-i}:=\phi_t^{i,j}$ for all $i\geq 1, j\geq i$ and $p:=p_1$, we can rewrite the system (\ref{random chain riccati})
\begin{equation}\label{random chain riccati-1}
\begin{array}{lrll}
     &\Dot{\phi}_t^{0}&=\phi_t^{0}\cdot \phi_t^{0}-p\epsilon, &\phi_T^{0}=pc, \\
      &\Dot{\phi}_t^{1}&=2\phi_t^{0}\cdot \phi_t^{1}+p\epsilon, &\phi_T^{i,i+1}=-p c,\\
     k\geq 2:\quad &\Dot{\phi}_t^{k}
     &=\sum\limits_{j=0}^k \phi_t^{j}\phi_t^{k-j} , & \phi_T^{k}=0.
\end{array}
\end{equation}

\begin{Lemma}\label{inf_sumo}
With  $c \geq 0 $, and $\varepsilon > 0 $, the solution to the Riccati system (\ref{random chain riccati-1})  satisfies 
\begin{align}
\sum\limits_{k=0}^{\infty} \,\phi_t^{k}=0;\quad\quad
\phi_{t}^{0}=\sqrt{p}\cdot\, \dfrac{(-\e-c\sqrt{p\e})e^{2\sqrt{p\e}(T-t)}+\e-c\sqrt{p\e}}{(-\sqrt{\e}-c\sqrt{p})e^{2\sqrt{p\e}(T-t)}-\sqrt{\e}+c\sqrt{p}} > 0\, \text{ when } p\neq 0,
\end{align}
for $0\le t \le T$. Moreover, the functions $\phi_t^k$'s are obtained by a series expansion  of the generating function $S_t(z) = \sum_{k=0}^{\infty} z^{k}\,\phi_t^{k}$, $ z \le 1$ of the sequence $\{\phi^{\ell}\}$ given by $S_{t} (1) \equiv 0 $, and if $z < 1 $, 
\begin{align}\label{S_t_random chain}
S_{t}(z) = \sqrt{p}\cdot \dfrac{\big(-\e(1-z)-c(1-z)\sqrt{p\e(1-z)}\big)\, e^{2\sqrt{p\e(1-z)}(T-t)}+\e(1-z)-c(1-z)\sqrt{p\e(1-z)}}{\big(-\sqrt{\e(1-z)}-\sqrt{p}c(1-z)\big)\, e^{2\sqrt{p\e(1-z)}(T-t)}-\sqrt{\e(1-z)}+\sqrt{p}c(1-z)}
\end{align}
for every $0 \le t \le T$. 
\end{Lemma}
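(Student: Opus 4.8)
The plan is to collapse the triangular Riccati system \eqref{random chain riccati-1} into a single scalar Riccati ODE via its generating function, and to read off all three assertions from the explicit solution of that scalar equation. First I would record that \eqref{random chain riccati-1} is well posed on $[0,T]$: the equation for $\phi_t^0$ is a self-contained scalar Riccati equation whose explicit solution (found in the next step) is bounded on $[0,T]$; and given $\phi^0,\dots,\phi^{k-1}$, the equation for $\phi_t^k$ with $k\ge1$ is a \emph{linear} ODE with bounded, explicit coefficients, so it has a unique global solution. By induction on $k$ this gives existence and uniqueness of $(\phi_t^k)_{k\ge0}$, which is what will let me identify it later.

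Next I would solve the scalar problem $\dot u_t=u_t^2-a$, $u_T=b$, with $a\ge0$, $b\ge0$. For $a>0$, setting $\beta=\sqrt a$ and separating variables yields
\[
u_t=\beta\,\frac{(b+\beta)e^{2\beta(T-t)}+(b-\beta)}{(b+\beta)e^{2\beta(T-t)}-(b-\beta)},
\]
and since $e^{2\beta(T-t)}\ge 1$ for $t\le T$ one checks immediately that the denominator stays $\ge 2\beta>0$ (hence no blow-up on $[0,T]$) and the numerator stays $\ge 2b\ge 0$, so $u_t\ge 0$ and in fact $u_t>0$ for $t<T$. Taking $(a,b)=(p\e,pc)$, $\beta=\sqrt{p\e}$, and a routine rearrangement gives the stated closed form for $\phi_t^0$ and its positivity. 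Repeating the computation with $\e$ replaced by $\e(1-z)$ and $c$ by $c(1-z)$, i.e.\ $(a,b)=\big(p\e(1-z),\,pc(1-z)\big)$ and $\beta=\sqrt{p\e(1-z)}$, produces exactly \eqref{S_t_random chain} for the candidate $S_t(z)$ when $0\le z<1$; and for $z=1$ one has $a=b=0$, so $\dot u_t=u_t^2$ with $u_T=0$ forces $u_t\equiv0$, i.e.\ $S_t(1)\equiv0$.

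It then remains to identify $S_t(z)$ with $\sum_{k\ge0}z^k\phi_t^k$. Multiplying the $k$-th line of \eqref{random chain riccati-1} by $z^k$ and summing, the Cauchy product on the right collapses to $\big(\sum_k z^k\phi_t^k\big)^2$ while the two inhomogeneous terms combine to $-p\e(1-z)$ and the terminal data sum to $pc(1-z)$; so, formally, $\sum_k z^k\phi_t^k$ solves precisely the scalar problem above. To make this honest I would argue in reverse: the explicit $S_t(z)$ of the previous step is analytic in $z$ on a neighbourhood of $[0,1]$, uniformly in $t\in[0,T]$ --- away from $z=1$ the denominator in \eqref{S_t_random chain} does not vanish for $z$ near $[0,1)$, and near $z=1$ the substitution $w=\sqrt{1-z}$ shows a factor $w^2$ in the numerator and $w$ in the denominator with the remaining ratio odd in $w$, so $S_t$ is actually analytic in $1-z$ there. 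Its Taylor coefficients $\psi_t^k:=\tfrac1{k!}\,\partial_z^k S_t(z)\big|_{z=0}$ satisfy the system \eqref{random chain riccati-1} (differentiate the scalar ODE and its terminal condition in $z$ at $z=0$ and match orders), so by the uniqueness recorded above $\psi_t^k=\phi_t^k$ for all $k$. Consequently $\sum_{k\ge0}z^k\phi_t^k=S_t(z)$ with radius of convergence strictly larger than $1$, and letting $z\uparrow1$ (equivalently, evaluating at $z=1$ by analyticity) gives $\sum_{k\ge0}\phi_t^k=S_t(1)=0$.

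I expect the main obstacle to be exactly this last passage --- turning the formal generating-function identity into a genuine one, i.e.\ controlling $\sum_k z^k\phi_t^k$ up to and including $z=1$. The analyticity-plus-uniqueness route localizes the difficulty to the single verification that the explicit $S_t(z)$ extends analytically across $z=1$; an alternative would be to bound $|\phi_t^k|$ directly by Gronwall estimates on the linear ODEs and compare the convolution recursion with Catalan growth, but that is heavier. A minor secondary point is the strict positivity of $\phi_t^0$, read off from the constant sign of the denominator of the explicit formula (equivalently, $\phi_t^0$ never crosses the level $\sqrt{p\e}$), with the caveat that $\phi_T^0=pc$ degenerates to $0$ when $c=0$.
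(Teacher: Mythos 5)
Your computational core coincides with the paper's: you sum the $k$-th line of \eqref{random chain riccati-1} against $z^k$ to get the scalar Riccati problem $\dot S_t(z)=S_t(z)^2-p\e(1-z)$, $S_T(z)=pc(1-z)$, solve it in closed form (your $u$-formula with $(a,b)=(p\e,pc)$, resp.\ $(p\e(1-z),pc(1-z))$, is exactly the paper's $\phi_t^0$ and \eqref{S_t_random chain} after factoring out $\sqrt p$), and read off $S_t(1)\equiv 0$ from $a=b=0$; the positivity argument and the $c=0$, $t=T$ caveat are fine. Where you genuinely depart from the paper is in justifying the identity $S_t(z)=\sum_k z^k\phi_t^k$ and the conclusion $\sum_k\phi_t^k=0$: the paper performs the generating-function manipulation formally and, at $z=1$, only remarks that one may pass to a limit along $z_n\uparrow 1$ (which by itself gives Abel summability of $\sum_k\phi_t^k$ to $0$ rather than convergence of the series), whereas you first establish uniqueness for the triangular system (scalar Riccati for $\phi^0$, then linear ODEs for $\phi^k$, $k\ge 1$) and then identify the Taylor coefficients of the explicit $S_t$ at $z=0$ with the $\phi_t^k$; this reverse identification is sound and strictly more careful than the paper's own proof. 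One step of yours needs a supplement: analyticity of $S_t$ on a neighbourhood of the segment $[0,1]$ (your evenness-in-$w$ argument at $z=1$ is correct, since the whole expression is invariant under $w\mapsto -w$) does not by itself yield a radius of convergence larger than $1$ at $z=0$; you must also exclude zeros of the denominator of \eqref{S_t_random chain} elsewhere in the closed unit disc. That check is short here: with $w=\sqrt{1-z}$, a zero requires $e^{2\sqrt{p\e}\,w(T-t)}=\big(c\sqrt p\,w-\sqrt\e\big)/\big(c\sqrt p\,w+\sqrt\e\big)$; by evenness you may take $\mathrm{Re}\,w\ge 0$, and for $\mathrm{Re}\,w>0$ and $t<T$ the left side has modulus $>1$ while the right side has modulus $\le 1$, so zeros occur only for purely imaginary $w$, i.e.\ at real $z=1+\lvert w\rvert^2>1$ (the point $z=1$ itself being your removable singularity), while the case $t=T$ is trivial because only finitely many $\phi_T^k$ are nonzero. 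Hence $S_t(\cdot)$ is indeed analytic on a disc of radius strictly larger than $1$, the evaluation at $z=1$ is legitimate, and with this check added your proof is complete and, on the summability point, tighter than the paper's.
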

\begin{proof} 
Define the generating function $S_t(z)=\sum_{k=0}^\infty z^k\ \phi_{t}^{(k)}$where $0\leq z< 1$ and $\phi_t^{(k)}=\phi_t^{k}$ in \eqref{random chain riccati-1} to avoid confusion with derivation. Then substituting \eqref{random chain riccati-1}, we obtain
\begin{equation}\label{Sol S_t}
\begin{split} 
\Dot{S}_t(z) &= \sum\limits_{k=0}^\infty {z}^k\Dot{\phi}_{t}^{(k)}=(S_t(z))^2-p\epsilon(1-z), \quad  0 \le t \le T ;\quad \quad 
S_T(z)=pc(1-z).
\end{split} 
\end{equation}

\noindent $\,\bullet\,$ For $z=1$, we get the ODE:
$\Dot{S}_t(1)=(S_t(1))^2\, ,\,S_T(1)=0.$
The solution is $S_t(1)\equiv 0$ for all $t$. Because the series defining $S_t(1)$ may not converge, we take a sequence $\{z_n\}\to 1$. The limit of $S_t(z_n)$ converges to the ODE above, and we can get the conclusion. Then we deduce:
\begin{equation*}
\sum\limits_{k=0}^\infty  \phi_{t}^{(k)}=0,\quad i.e.,\quad \phi_{t}^{(0)}=-\sum\limits_{k=1}^\infty \phi_{t}^{(k)}.
\end{equation*}
\noindent $\,\bullet\,$ For $z\neq 1$, the solution to the Riccati equation (\ref{S_t_random chain}) satisfies: 
\begin{equation*}
     \begin{split} 
     S_t(z)
     &=\dfrac{\big(-p\e(1-z)-pc\sqrt{p\e(1-z)}(1-z)\big)e^{2\sqrt{p\e(1-z)}(T-t)}+p\e(1-z)-pc\sqrt{p\e(1-z)}(1-z)}{\big(-\sqrt{p\e(1-z)}-pc(1-z)\big)e^{2\sqrt{p\e(1-z)}(T-t)}-\sqrt{p\e(1-z)}+pc(1-z)}\\
     &\xrightarrow[T\to \infty]{} 
     \sqrt{p\e(1-z)},
     \end{split} 
\end{equation*}
which gives \eqref{S_t_random chain}.
 \end{proof}
\begin{remark}
It follows from Lemma \ref{inf_sumo} that the forward dynamics (\ref{eq: OL-Nash1}) can be formally written as:
\begin{equation} \label{eq: OLNashInf1}
\begin{split}
      {\mathrm d}X_t^i&=- \sum_{j=0}^\infty\phi_t^{j}X_t^{i+j} {\mathrm d}t+\s {\mathrm d}W_t^i 
     =\phi_t^{0}\cdot \Big( \sum_{j=1}^\infty \frac{-\phi_t^{j}}{\phi_t^{0}}X_t^{i+j} -X_t^{i}\Big) {\mathrm d}t +\s {\mathrm d}W_t^i 
 \end{split} 
\end{equation}
for $i \ge 1$, $0 \le t \le T$. This is a mean-reverting type process, since $\phi_t^0>0$.
We also see that this system is invariant under the shift of indices of individuals. In particular, the law of $X^i$ is the same as the law of $X^1$ for every $i$. 
\end{remark} 
 
Here is a a summary of this section on the random infinite player game.
\begin{prop}
An open-loop Nash equilibrium for the random infinite-player game with cost functionals $J^i$ is determined by (\ref{eq: OLNashInf1}), where $\{ \phi_t^{j},\,0\leq t\leq T;\, j \ge 0 \} $ are the unique solution to the infinite system  (\ref{random chain riccati-1}) of Riccati equations.  
\end{prop}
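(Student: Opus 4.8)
The plan is to verify that the candidate constructed above --- the control $\hat{\boldsymbol\alpha}=(\hat\alpha^i)_{i\ge1}$ appearing in (\ref{eq: OLNashInf1}), the state process $X$ solving the same system, and the adjoint pair $(Y^{i,i},Z^{i,i,\cdot})$ read off from the ansatz (\ref{ansatz-general}) and from (\ref{random chain martingale}) --- is an admissible solution of the forward--backward system attached to each player's Pontryagin maximum principle, and then to invoke the \emph{sufficient} (convexity) form of that principle to obtain the Nash property. The first ingredient is the well-posedness of the Riccati system (\ref{random chain riccati-1}). The scalar equation for $\phi^0$ is a one-dimensional Riccati ODE whose explicit solution is given in Lemma \ref{inf_sumo} and whose denominator stays bounded away from $0$ on $[0,T]$ for $c\ge0$, $\e>0$, $p\in(0,1]$ (it equals $-2\sqrt\e$ at $t=T$ and the exponential keeps it negative for $t<T$); given $\phi^0$, the equation for $\phi^1$ is linear, and for $k\ge2$ the equation for $\phi^k$ is linear with a source built from $\phi^0,\dots,\phi^{k-1}$, so by induction the whole system has a unique solution on $[0,T]$. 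The quantitative input needed beyond Lemma \ref{inf_sumo} is absolute summability: a singularity analysis of the explicit generating function $S_t(\cdot)$ in (\ref{S_t_random chain}), whose only singularity reached from the origin is a square-root branch point at $z=1$ (reflecting the Catalan combinatorics of \cite{fengFouqueIchiba2020linearquadratic}), gives $|\phi_t^k|=O(k^{-3/2})$ uniformly for $t\in[0,T]$, so that $\Lambda:=\sum_{k\ge0}\sup_{0\le t\le T}|\phi_t^k|<\infty$. (Alternatively one can establish $\phi_t^k\le0$ for $k\ge1$ by sign propagation in the ODEs, whence $\sum_{k\ge1}|\phi_t^k|=\phi_t^0$ via the identity $\sum_{k\ge0}\phi_t^k=0$.)

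With $\Lambda<\infty$ in hand, the drift map $x\mapsto\big(-\sum_{j\ge0}\phi_t^j x^{i+j}\big)_{i\ge1}$ is a bounded linear operator, with norm $\le\Lambda$ uniformly in $t$, on a space of sequences with uniformly bounded components (or on $\ell^\infty$); hence the infinite linear system (\ref{eq: OLNashInf1}) has a unique strong solution $X$ started from the i.i.d. data $(\xi_i)$, with $\sup_{i\ge1}\sup_{0\le t\le T}\E[(X_t^i)^2]<\infty$ by Picard iteration and Gronwall, and by uniqueness $X$ is invariant under the shift of indices, so $\mathrm{Law}(X^i)=\mathrm{Law}(X^1)$. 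Consequently $\hat\alpha_t^i=-\sum_{j\ge0}\phi_t^j X_t^{i+j}$ obeys $\E\int_0^T|\hat\alpha_t^i|^2\,dt\le T\Lambda^2\sup_{i,t}\E[(X_t^i)^2]<\infty$ and is admissible, while $Y_t^{i,i}=\sum_{j\ge i}\phi_t^{i,j}X_t^j$ is square-integrable and, together with $Z_t^{i,i,k}=\sigma\phi_t^{i,k}\1_{\{k\ge i\}}$, $Y^{i,i+1}_t=-Y^{i,i}_t$ and $Y^{i,j}\equiv0$ for $j\ne i,i+1$ (Remark \ref{finite-check}), solves the adjoint BSDE system (\ref{BSDE-general}); this last point is precisely the ansatz computation (\ref{ito-general})--(\ref{random chain riccati}), now legitimate because $\Lambda<\infty$ justifies interchanging $\sum_k$ with the It\^o differential.

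For the verification step, fix a player $i$ and an arbitrary admissible $\beta$, and let $\widetilde X$ be the state trajectory when player $i$ uses $\beta$ while every other player keeps $\hat\alpha^j$. The key structural feature of a directed chain is that $X^i$ does not enter the dynamics of $X^{i+1},X^{i+2},\dots$, so $\widetilde X^j=\hat X^j$ for all $j\ne i$; in particular the only state besides $X^i$ appearing in $J^i$, namely $X^{i+1}$, is unchanged, and $\widetilde X^i-\hat X^i$ has zero martingale part (both use $W^i$) and drift $\beta_t-\hat\alpha_t^i$. Then the standard Pontryagin computation applies: convexity of the running cost $\tfrac12(\alpha^i)^2+\tfrac{\e}{2}p_1(x^{i+1}-x^i)^2$ and of $g_i$ in $(x^i,\alpha^i)$ with $x^{i+1}$ frozen, the first-order condition $\partial_{\alpha^i}H^i=y^{i,i}+\hat\alpha^i=0$, the adjoint equation $dY_t^{i,i}=-\partial_{x^i}H^i(X_t,\dots)\,dt+\sum_k Z_t^{i,i,k}\,dW_t^k$, and It\^o's formula for $Y_t^{i,i}(\widetilde X_t^i-\hat X_t^i)$ (no bracket term, equal values at $t=0$) combine to give $J^i(\beta,\hat{\boldsymbol\alpha}^{-i})-J^i(\hat{\boldsymbol\alpha})\ge0$, the local-martingale terms having zero expectation by the integrability of the previous paragraph. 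This is exactly the Nash property, and the uniqueness established in the first paragraph completes the proof.

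I expect the only genuine difficulty to lie in the infinite-dimensional bookkeeping, namely in establishing the uniform summability $\Lambda<\infty$ (equivalently, the decay of the Catalan-type coefficients $\phi_t^k$) and the well-posedness of (\ref{eq: OLNashInf1}) in an appropriate sequence space, as these are what legitimize differentiating the ansatz and interchanging $\sum_k$ with $d$ in (\ref{ito-general}). Everything else --- the SDE estimates, the convexity observation, and the verification computation --- is routine and largely parallels the deterministic directed-chain analysis of \cite{fengFouqueIchiba2020linearquadratic} with $(\e,c)$ replaced by $(p\e,pc)$.
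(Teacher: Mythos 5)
Your proposal is correct and follows essentially the same route as the paper: the proposition is established there precisely through the preceding Pontryagin construction (Hamiltonian, first-order condition, adjoint BSDEs, the ansatz (\ref{ansatz-general}), and the Riccati system (\ref{random chain riccati-1}) solved via Lemma \ref{inf_sumo}), stated as a summary without a separate verification argument. The extra ingredients you supply --- the uniform summability $\Lambda<\infty$ of the $\phi_t^k$, well-posedness of the infinite system (\ref{eq: OLNashInf1}) in a sequence space, and the convexity/sufficiency verification for a unilateral deviation --- are details the paper leaves implicit, and they are sound.
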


\subsection{Stationary Solution and Catalan Markov Chain}
By taking $T \to \infty$, we look at the stationary solution of the Riccati system (\ref{random chain riccati-1}) satisfying $\Dot{\phi}_\cdot^{j}=0$ for all $j$. Without loss of generality, we assume $\epsilon=1$. Otherwise the solution should be multiplied by $\sqrt{\epsilon}$ for all $\{\phi^k,\,k\geq 1\}$.
Then the system gives the solutions and the recurrence relation: 
\[
\phi^0=\sqrt{p}, \quad \phi^1=-\frac{\sqrt{p}}{2}, \quad  \text{ and } \quad \sum\limits_{k=0}^n \phi^k\phi^{n-k}=0. 
\]
This is closely related to the recurrence relation of {\it Catalan} numbers. By using a moment generating function method as in Appendix \ref{moment_generating_method}, we obtain the stationary solution:
\[
\phi^0=\sqrt{p},\quad \phi^1=-\frac{\sqrt{p}}{2} ,\quad  \text{ and } \quad \phi^k=-\frac{(2k-3)!}{(k-2)!\,k!\,2^{2k-2}}\, \sqrt{p}  \quad \text{ for } \quad k\geq 2. 
\] 

Let $q_0=-\dfrac{\phi^0}{\sqrt{p}}=-1, q_1=-\dfrac{\phi^1}{\sqrt{p}}=\dfrac{1}{2} $,  and $q_k=-\frac{\phi^k}{\sqrt{p}}=\dfrac{(2k-3)!}{(k-2)!k!}\,\dfrac{1}{2^{2k-2}}\,$ for $k\geq 2$. By lemma \ref{inf_sumo}, we have the relation: $\sum\limits_{k=0}^\infty q_k=0$. Then we consider the continuous-time Markov chain $M(\cdot)$ with state space $\, \mathbb N \,$ and Catalan generator matrix 
\begin{equation} \label{eq: Q}
\mathbf{Q} \, =\left( \begin{array}{ccccc} 
q_0 & q_1 &  q_2 & q_3 & \cdots \\
0 & q_0 & q_1 &  q_2 & \ddots \\
0 &0 & q_0 & q_1 &\ddots\\
& \ddots & \ddots &\ddots  & \ddots\\
\end{array} 
\right) .
\end{equation} 
Note that the transition probabilities of the continuous-time Markov chain $M(\cdot)$, called a Catalan Markov chain,  are $p_{i,j}(t)=\, \mathbb P (M(t)=j|M(0)=i)=(e^{t\mathbf{Q}})_{i,j},\, i,j \ge 1, \, t\geq 0$. With replacement of $\phi^{j}_{t}$, $t\ge 0 $ by the stationary solution $\phi^{j}$ in (\ref{eq:1}) and assuming $\sigma=1$, the infinite particle system $ (X_{\cdot}^{i}$, $i \ge 1 )$ can be represented formally as a linear stochastic evolution equation:
\begin{equation}\label{eq: evolution_random_directed} 
\begin{split}
{\mathrm d}\mathbf{X}_t&= \,\sqrt{p}\,\mathbf{Q\,X}_t {\mathrm d}t+ {\mathrm d}\mathbf{W}_t; \quad t \ge 0 , 
\end{split}
\end{equation}
where $\mathbf{X_.}=(X_{.}^{k}, k \ge 1 )$ with $\mathbf{X}_0=\mathbf{x}_0$ and $\mathbf{W_.}=(W_{.}^{k} , k \ge 1 )$. 
Its solution is:
\begin{equation*}
\mathbf{X}_t=e^{t\sqrt{p}\,\mathbf{Q}}\mathbf{x_0}+ \int_0^t e^{(t-s)\sqrt{p}\,\mathbf{Q}}{\mathrm d}\mathbf{W}_s ;\quad t\geq 0.
\end{equation*}

Without loss of generality, let us assume $\mathbf{X}_0=\mathbf{0}$. Then, 
\begin{equation*}
\begin{split} 
{X}_t^{i}&= \int^{t}_{0}   \sum\limits_{j=i}^{\infty}(\exp (  (t-s)\sqrt{p}\, \mathbf{Q}))_{i,j} {\mathrm d} {W}_{s}^{j} \, =\mathop{\mathlarger{\int^t_0}}\,  \sum_{j=i}^\infty p_{i,j}(t-s) {\mathrm d} W_{s}^{j} 
 =\mathop{\mathlarger{\int^t_0}}\, \sum_{j=i}^\infty\, \mathbb P (M(t-s)=j|M(0)=i){\mathrm d} W_{s}^{j}\\
 &=\mathbbm{E}^M \Big[ \mathop{\mathlarger{\int^t_0}}\,\sum_{j=i}^\infty\, \mathbf{1}_ {(M(t-s)=j)} {\mathrm d} W_{s}^{j} | M(0)=0 \Big];\quad t\geq 0,
\end{split} 
\end{equation*}
where the expectation is taken with respect to the probability induced by the Markov chain $M(\cdot)$, independent of the Brownian motions $(W_{\cdot}^{j},\, j\in \mathbb{N}_0)$. This is a Feynman--Kac representation formula for the infinite particle system $\mathbf X_{\cdot}$ in (\ref{eq: evolution_random_directed}) associated with the continuous-time Markov chain $M(\cdot)$. We can compute explicitly the corresponding transition probability $(p_{i,j}(\cdot))$. \\

\begin{prop}\label{sol_markovc_random}
The Gaussian process $\, X_{t}^{i} \,$, $\, i \ge 1 \,$, $\, t \ge 0 \,$ in \eqref{eq: evolution_random_directed}, corresponding to the Catalan Markov chain, is given by
\begin{equation} \label{solution_mc_random}
\begin{split}
{X}_{t}^{i}  \, &=\, \sum\limits_{j=i}^{\infty}\int^{t}_{0}   (\exp ( \sqrt{p}\,Q (t-s)))_{i,j} {\mathrm d} {W}_{s}^{j} \, =\, \sum\limits_{j=i}^{\infty} \int^{t}_{0}  \frac{\,p^{j-i}\,(t-s)^{2(j-i)}  \,}{\, (j-i)!\,} \cdot F^{(j-i)}(-p(t-s)^{2})  {\mathrm d} W_{s}^{j}  \\
&\, =\,  \sum\limits_{j=i}^{\infty} \int^{t}_{0}  \frac{\,p^{j-i}\,(t-s)^{2(j-i)}  \,}{\, (j-i)!\,} \cdot \rho_{j-i}(- p(t-s)^{2}) \, e^{-\sqrt{p}\,(t-s)} \cdot  {\mathrm d} W_{s}^{j},
\end{split}
\end{equation}
where $\, {W}_{\cdot}^{j}  \,$, $ j \in \mathbb N \,$ are independent standard Brownian motions and $\rho_{i}(\cdot)$ is defined by  
\begin{equation} \label{eq: rhokxProp1_random}
\rho_{i}(x) =\frac{1}{2^i} \sum\limits_{j=i}^{2i-1}\, \frac{(i-1)!}{(2j-2i)!!(2i-j-1)!} \cdot (-x)^{\,-\frac{j}{2}}, 
\end{equation} 
for $i\geq 1$, and $\rho_{0}(x) \, =\,  1$ for $x \le 0 $.  
Moreover, when $p\neq 0$, the asymptotic variance of $X_t^{i}$, $i\geq 1$ is finite, i.e.   
\[
\lim\limits_{t \to \infty}\text{Var} ( X_{t}^{i})\,=\,\lim\limits_{t \to \infty}\text{Var} ( X_{t}^{1})\,=\,\dfrac{1}{\sqrt{2\,p}}.
\]
\end{prop}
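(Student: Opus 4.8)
The plan is to prove the two assertions separately: (i) the explicit form of the kernel $\big(\exp(\sqrt p\,\mathbf Q(t-s))\big)_{i,j}$ in \eqref{solution_mc_random}, which comes from the generating-function calculus for the Catalan generator, and (ii) the value $1/\sqrt{2p}$ of the limiting variance, which comes from Parseval's identity applied to that same generating function together with one elementary integral. In the process one also gets, for free, the $L^2$-well-posedness of the series defining $\mathbf X$.

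\textbf{Identifying the kernel.} The Catalan matrix $\mathbf Q$ of \eqref{eq: Q} is upper-triangular Toeplitz with first row $(q_0,q_1,\dots)$, and by construction $q_k=-\phi^k/\sqrt p$, where $\{\phi^k\}$ is the stationary solution of \eqref{random chain riccati-1}; its generating function is the $T\to\infty$ limit of $S_t(z)$ in Lemma~\ref{inf_sumo} (with $\varepsilon=1$), i.e. $\sqrt p\,\sqrt{1-z}$. Hence $g(z):=\sum_{k\ge0}q_kz^k=-\sqrt{1-z}$, which is continuous on $\{|z|\le1\}$ with $|g|\le\sqrt2$ there, and $\sum_k|q_k|=2<\infty$. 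Since $\mathbf Q$ is upper-triangular, each entry of $\mathbf Q^n$ is a finite convolution and "symbols multiply", $(\mathbf Q^n)_{i,j}=[z^{\,j-i}]\,g(z)^n$; by Cauchy's estimate $|(\mathbf Q^n)_{i,j}|\le2^{n/2}$, so the operator exponential converges absolutely entrywise and, term by term,
\[
\big(\exp(\sqrt p\,\mathbf Q(t-s))\big)_{i,j}=[z^{\,j-i}]\exp\!\big(-\sqrt p\,(t-s)\sqrt{1-z}\big).
\]
Writing $k=j-i$ and $a=\sqrt p\,(t-s)$, I observe $\exp(-a\sqrt{1-z})=F\big(-a^2(1-z)\big)$ with $F(x):=e^{-\sqrt{-x}}$, so Taylor-expanding $F$ about $x=-a^2$ gives $[z^{k}]\exp(-a\sqrt{1-z})=\frac{a^{2k}}{k!}F^{(k)}(-a^2)=\frac{p^{k}(t-s)^{2k}}{k!}F^{(k)}\big(-p(t-s)^2\big)$, the middle expression in \eqref{solution_mc_random}. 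The last expression follows once I verify, by induction on $k$, that $F^{(k)}(x)=\rho_k(x)\,e^{-\sqrt{-x}}$ with $\rho_k$ a polynomial in $(-x)^{-1/2}$ of the form \eqref{eq: rhokxProp1_random}: with $u:=\sqrt{-x}$ one has $\frac{d}{dx}=-\frac1{2u}\frac{d}{du}$, so $F^{(k)}=e^{-u}P_k(1/u)$ forces $F^{(k+1)}=\frac1{2u}e^{-u}\big(P_k(1/u)+u^{-2}P_k'(1/u)\big)$, and solving this recursion with $P_0\equiv1$ reproduces the coefficients in \eqref{eq: rhokxProp1_random}. Substituting this kernel into $X_t^i=\sum_{j\ge i}\int_0^t\big(\exp(\sqrt p\,\mathbf Q(t-s))\big)_{i,j}\,{\mathrm d}W_s^j$ then yields \eqref{solution_mc_random}, its $L^2$-convergence being a byproduct of the variance computation below.

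\textbf{The limiting variance.} By independence of the $W^j$ and invariance of the system under index shifts (so $\mathrm{Var}(X_t^i)=\mathrm{Var}(X_t^1)$),
\[
\mathrm{Var}(X_t^i)=\sum_{j\ge i}\int_0^t\big[(\exp(\sqrt p\,\mathbf Q(t-s)))_{i,j}\big]^2{\mathrm d}s=\sum_{k\ge0}\int_0^t a_k(r)^2\,{\mathrm d}r,\qquad a_k(r):=[z^k]G_r(z),
\]
where $G_r(z):=\exp(-\sqrt p\,r\sqrt{1-z})$ and $r=t-s$. Now $G_r$ is continuous on $\{|z|\le1\}$ with $|G_r|\le1$ there, since $\Re(1-z)\ge0$ forces $\sqrt{1-z}$ into the sector $\{|\arg|\le\pi/4\}$, hence $\Re\sqrt{1-z}\ge0$; thus $G_r$ lies in the Hardy space $H^2$ of the disc, and Parseval's identity gives $\sum_{k\ge0}a_k(r)^2=\frac1{2\pi}\int_{-\pi}^{\pi}|G_r(e^{i\theta})|^2{\mathrm d}\theta$. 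Letting $t\to\infty$ (monotone convergence in the sum over $k$) and interchanging the remaining two integrals (Tonelli; all integrands are nonnegative),
\[
\lim_{t\to\infty}\mathrm{Var}(X_t^i)=\frac1{2\pi}\int_{-\pi}^{\pi}\Big(\int_0^\infty e^{-2\sqrt p\,r\,\Re\sqrt{1-e^{i\theta}}}\,{\mathrm d}r\Big){\mathrm d}\theta=\frac1{4\sqrt p\,\pi}\int_{-\pi}^{\pi}\frac{{\mathrm d}\theta}{\Re\sqrt{1-e^{i\theta}}},
\]
which is legitimate because $\Re\sqrt{1-e^{i\theta}}>0$ for $\theta\neq0$ and the last integrand is $\sim\sqrt2/\sqrt{|\theta|}$ near $\theta=0$, hence integrable. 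Using $1-e^{i\theta}=-2i\sin(\theta/2)e^{i\theta/2}$ on $(0,2\pi)$ one finds $\Re\sqrt{1-e^{i\theta}}=\sqrt{2\sin(\theta/2)}\,\cos(\tfrac\pi4-\tfrac\theta4)$; by $2\pi$-periodicity the integral over $(-\pi,\pi)$ equals that over $(0,2\pi)$, and the substitutions $\theta=4t$, then $w=\tan t$, then $x=\sqrt w$ reduce it to $2\sqrt2\int_0^{\pi/2}\frac{{\mathrm d}t}{\sqrt{\sin t\cos t}\,(\sin t+\cos t)}=2\sqrt2\int_0^{\infty}\frac{{\mathrm d}w}{\sqrt w\,(1+w)}=2\sqrt2\cdot\pi$. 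Therefore $\lim_{t\to\infty}\mathrm{Var}(X_t^i)=\frac{2\sqrt2\,\pi}{4\sqrt p\,\pi}=\frac1{\sqrt{2p}}$.

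\textbf{Main obstacle.} The conceptual content is light; the work is bookkeeping. In the first part I must make the infinite-matrix calculus rigorous — entrywise convergence of the operator exponential, the "symbols multiply" rule for upper-triangular Toeplitz matrices, and commuting $\sum_n$ with coefficient extraction — all of which the bound $|(\mathbf Q^n)_{i,j}|\le2^{n/2}$ and uniform-on-compacts convergence handle. In the second part the care goes into applying Parseval with the genuine boundary values of $G_r$ (its only non-analyticity on $\{|z|=1\}$ is the removable-looking branch point at $z=1$) and into the repeated Tonelli interchanges; once set up, the trigonometric integral is routine.
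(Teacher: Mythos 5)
Your proposal is correct, and it splits naturally into a part that mirrors the paper and a part that genuinely departs from it. For the kernel, the paper's proof writes $(\sqrt{p}\,\mathbf{Q})^{2} = -p\,J_{\infty}(-1)$ and invokes the functional calculus for an infinite Jordan block, $F(J_{\infty}(\lambda))=\sum_{k}\frac{F^{(k)}(\lambda)}{k!}B^{k}$ with $F(x)=e^{-\sqrt{-x}}$; your Toeplitz-symbol argument, $(\mathbf{Q}^{n})_{i,j}=[z^{j-i}]\,(-\sqrt{1-z})^{n}$ followed by the Taylor expansion of $e^{-a\sqrt{1-z}}=F(-a^{2}(1-z))$ about $-a^{2}$, is the same expansion in different clothing, though you are more careful about entrywise convergence of the exponential and about commuting the sum with coefficient extraction. (Your recursion for $F^{(k)}$ is exactly the paper's $\rho_{k+1}=\rho_{k}'+\rho_{k}/(2\sqrt{-x})$; note that solving it yields the appendix's formula with $(j-1)!$ in the numerator, so the $(i-1)!$ appearing in the displayed \eqref{eq: rhokxProp1_random} is evidently a typo, not an obstacle.) The genuine divergence is the limiting variance: the paper converts $\rho_{j}(-\nu^{2})$ into modified Bessel functions $K_{j-1/2}$, uses the Gamma-function formula for $\int_{0}^{\infty}t^{\alpha-1}K_{\nu}(t)^{2}\,\mathrm{d}t$, and resums the resulting series with a special generating-function identity, whereas you apply Parseval in $H^{2}$ to $G_{r}(z)=e^{-\sqrt{p}\,r\sqrt{1-z}}$ (legitimate, since $\Re\sqrt{1-z}\ge 0$ on the closed disc makes $G_{r}$ bounded and continuous there), integrate out $r$ by Tonelli, and are left with the elementary integral $\frac{1}{4\pi\sqrt{p}}\int_{-\pi}^{\pi}\big(\Re\sqrt{1-e^{i\theta}}\big)^{-1}\mathrm{d}\theta$, which your substitutions correctly reduce to $2\sqrt{2}\,\pi/(4\pi\sqrt{p})=1/\sqrt{2p}$; I checked the trigonometric reduction and it is right. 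Your route buys elementarity (no Bessel functions or special series identities) and an immediate proof of $L^{2}$-convergence of the series defining $X_{t}^{i}$, since $\sum_{k}a_{k}(r)^{2}\le 1$ pointwise in $r$; what it gives up is the explicit Bessel-function form of the kernel coefficients, which the paper's computation records along the way and which has independent interest.
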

\begin{proof}
Given in \textbf{Appendix} \ref{appendix_CatalanMarkovChainRandom}.
\end{proof}

\section{Random Two-sided Directed Chain Game} \label{section 3}
To extend the investigation of random directed chain in Section \ref{section random chain}, we will consider a linear-quadratic stochastic game for countably many players with random double-sided interactions over a finite time horizon $\,[0, T]\,$. We shall study the effect of random double-sided interactions on the open-loop Nash equilibrium and compare it with the random one-sided (directed) chain interaction in Section \ref{section random chain}. To represent the random interactions of player $n$ in two directions, we introduce the binary random variables $R_n$ and $L_n$. The $R_n$'s for $n\in \mathbb{Z}$ are independent and identically distributed random variables taking values in $\{0,1\}$ with  probabilities $p_0$ and  $p_1=1-p_0$. The $L_n$'s for $n\in\mathbb{Z}$ are also independent and identically distributed random variables taking values in $\{0,1\}$  with  probabilities $q_0=1-q_1$ and $q_{1}$. $\{R_n,n\in \mathbb{Z}\}$ is independent of $\{L_n,n\in \mathbb{Z}\}$.
When $R_n$ is one, we assume player $n$ is interacting with player $n+1$. When $L_n$ is one, we assume player $n$ is interacting with player $n-1$. The random variable $R_n$ affects the left arrow on the right of site $n$ and the random variable $L_n$ affects the right arrow  on the left of site $n$. Examples of the chain structure are shown in Figure \ref{figure:random2sidedchain}.

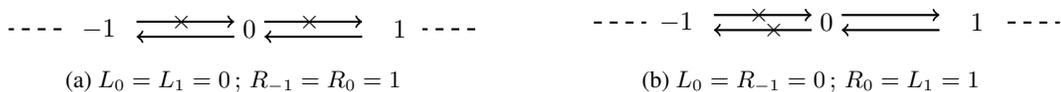
\begin{figure}[!h]
	\centering
	\begin{minipage}[t]{6cm}
		\centering
		\begin{tikzpicture}[scale=1,mydashed/.style={dashed,dash phase=1pt}]
\draw[thick, mydashed] (0.3,0) -- (1.1,0);
\draw(1.5,0) node{$-1$};
\draw[thick, <-] (2,-0.1) -- (3.3,-0.1);
\draw[thick, ->] (2,0.1) -- (3.3,0.1);
\draw(2.6,0.1) node{$\times$};
\draw(3.5,0) node{$0$};
\draw[thick,<-] (3.7,-0.1) -- (5,-0.1);
\draw[thick,->] (3.7,0.1) -- (5,0.1);
\draw(4.3,0.1) node{$\times$};
\draw(5.5,0) node{$1$};
\draw[thick, mydashed] (5.8,0) -- (6.6,0);
\end{tikzpicture}
\subcaption{$L_0=L_1=0\,;\, R_{-1}=R_0=1$}
	\end{minipage}
	\hspace{1.5cm}
	\begin{minipage}[t]{6cm}
		\centering
		\begin{tikzpicture}[mydashed/.style={dashed,dash phase=3pt}]
\draw[thick, mydashed] (0.3,0) -- (1.1,0);
\draw(1.5,0) node{$-1$};
\draw[thick, <-] (2,-0.1) -- (3.3,-0.1);
\draw(2.8,-0.1) node{$\times$};
\draw[thick, ->] (2,0.1) -- (3.3,0.1);
\draw(2.6,0.1) node{$\times$};
\draw(3.5,0) node{$0$};
\draw[thick,<-] (3.7,-0.1) -- (5,-0.1);
\draw[thick,->] (3.7,0.1) -- (5,0.1);
\draw(5.5,0) node{$1$};
\draw[thick, mydashed] (5.8,0) -- (6.6,0);
\end{tikzpicture}
\subcaption{$L_0=R_{-1}=0\,;\, R_{0}=L_{1}=1$}
	\end{minipage}
	\caption{Examples of Two-sided Directed Chain}
	\label{figure:random2sidedchain}
\end{figure}

We assume the dynamics of the states of all players are given by the one-dimensional stochastic differential equations of the form: for $i\in \mathbb{Z}$
 \begin{equation}\label{eq:2sidedSDE}
    dX_t^i=\a_t^i {\mathrm d}t+\s {\mathrm d}W_t^i,\quad 0\leq t\leq T,
\end{equation}
where $\,(W^{i})_{0\leq t \leq T}\,$, $\,i \in \mathbb Z\,$ are independent, standard Brownian motions, independent of the initial values $\,X_{0}^{i} \, :=\, \xi^{i}\,$, $\,i \in \mathbb Z \,$, the initial values $\,\xi^{i}\,$ are i.i.d. with finite second moments for $\,i \in \mathbb Z\,$, a positive constant $\, \sigma > 0 \,$ is fixed and $\,\alpha^{i}_{\cdot}\,$ is a control of player $\,i\,$ adapted to the filtration of the Brownian motions with $\,\mathbb E [ \int^{T}_{0} \lvert \alpha_{t}^{i}\rvert^{2} {\mathrm d} t ] < \infty\,$ for $\,i \in \mathbb Z\,$. 

In order to take into account the two-sided feature of the model, we introduce the parameter $p\,\in\,(0,1)$, which will measure the strength of the asymmetry between the right and left interactions. Notice that if $p=0$ or $1$, the chain is one-sided as already treated in Section \ref{section random chain}. We shall see how this parameter $p$ and the weighted average $\,p p_{1} + (1-p) q_{1}\,$ appear in the Nash equilibrium.  

In the model, player $\,i\,$ controls its own strategy $\,\alpha^{i}\,$ in order to minimizes the objective function defined by 
\begin{align}
J^{i}(\boldsymbol{\alpha})&=\mathbbm{E}_{X,L,R} \bigg\{ \displaystyle \int_0^T \big( \frac{1}{2}(\a^{i}_t)^2
+\frac{\varepsilon}{2}p\,(X_t^{i+R_i}-X_t^{i})^2+\frac{\varepsilon}{2}(1-p)\,(X_t^{i}-X_t^{i-L_i})^2\big) {\mathrm d}t\nonumber\\
&\quad\quad\quad\quad\quad+\frac{c}{2}p\,(X_T^{i+R_i}-X_T^{i})^2+\frac{c}{2}(1-p)\,(X_T^{i}-X_T^{i-L_i})^2\bigg\}  \label{eq: cost}\\
&=\mathbbm{E}_X \bigg\{ \displaystyle \int_0^T \big(\frac{1}{2}(\a^{i}_t)^2
+\frac{\varepsilon}{2}p\, \cdot\,p_1(X_t^{i+1}-X_t^{i})^2+\frac{\varepsilon}{2}(1-p)\, \cdot\,q_1 (X_t^{i}-X_t^{i-1})^2 \big) {\mathrm d}t\nonumber\\
&\quad\quad\quad\quad\quad+\frac{c}{2}p\,\cdot\,p_1(X_T^{i+1}-X_T^{i})^2+\frac{c}{2}(1-p)\,\cdot\,q_1(X_T^{i}-X_T^{i-1})^2\bigg\},\nonumber\\
&:= \mathbbm{E}_X\Big[ \int^{T}_{0}  f^{i}(X_{t}, \alpha_{t}^{i})  {\mathrm d} t + g^{i}(X_{T}) \Big] \,  ,\nonumber\\
\text{where }&
f^{i}(x, \alpha^{i}) \,:=\, \frac{\,1\,}{\,2\,}(\alpha^{i})^2 + \frac{\varepsilon}{2} p\, \cdot\,p_1(x^{i+1} - x^{i})^{2}+  \frac{\varepsilon}{2} (1-p)\, \cdot\,q_1(x^{i} - x^{i-1})^{2} \, , \nonumber\\
&g^{i}(x) \, :=\,   \frac{c}{2} p\, \cdot\,p_1(x^{i+1} - x^{i})^{2}+  \frac{c}{2} (1-p)\,\cdot\,q_1 (x^{i} - x^{i-1})^{2} ,\nonumber
\end{align}

for some constants $\varepsilon >0$, $c\geq 0$, and $\, x = (x^{i}, i \in \mathbb Z ) \,$, $\alpha=(\alpha^i: i \in \mathbb Z)$ with $\alpha^{i} \in \mathbb R\,$. Each player optimizes the cost determined by the mixture of two criteria: distance from the right neighbor in the directed chain with weight $p$ and distance from the left neighbor with weight $1 - p$. Here, the superscript $\,i\,$ indicates the index but not the power. The functions $\, f^{i}\,$ and $\, g^{i}\,$ denote the running cost and terminal cost  of player $\,i\,$, respectively. 
To simplify some notations, let us write $\, \mathcal S \, :=\, \mathbb R^{\mathbb Z}\,$ and $\,\mathcal S^{2} \, :=\, \mathbb R^{\mathbb Z \times \mathbb Z}\,$. 

\subsection{Open-Loop Nash Equilibrium}
We search for Nash equilibrium of the system among strategies $\{\alpha^i,\, i\in \mathbb{Z}\}$. We construct an open-loop Nash equilibrium by the Pontryagin stochastic maximum principle.
The corresponding Hamiltonian for player $i$ is defined by
\begin{align} 
H^{i}(x, y, \alpha) \, &:=\, \sum_{k=-\infty}^{\infty} \alpha^{k} \, y^{i, k} + f^{i}(x, \alpha^{i}) \, ; \label{eq: Hamiltonian2sided}\\
&= \, \sum_{k=-n_i}^{n_i} \alpha^{k} \, y^{i, k} + \frac{\,1\,}{\,2\,}(\alpha^{i})^2 + \frac{\varepsilon}{2} p\, \cdot\,p_1(x^{i+1} - x^{i})^{2}+  \frac{\varepsilon}{2} (1-p)\,\cdot\,q_1 (x^{i} - x^{i-1})^{2} \, , \nonumber
\end{align}
for $\, x, \alpha \in \mathcal S\,$, $\, y \in \mathcal S^{2}\,$, $\,i\in \mathbb Z\,$, where only finitely many $y^{i,k}$ are non-zero for every
given $i$. Here, $n_i$ is a finite positive number depending on $i$ with $n_i > |i|$. This assumption is checked in Remark \ref{rmk:finiteYcheck2sided} below. Thus, the Hamiltonian $H^i$ is well defined for every $i$.

The value of $\hat\alpha^i$ minimizing the Hamiltonian $H^{i}$ with respect to $\alpha^i$, when all the other variables including $\alpha^j$ for $j\neq i$ are fixed, is given by the first order condition 
\[
    \partial_{\alpha^i}H^i=y^{i,i}+\alpha^i=0 \quad \text{leading to the choice:} \quad  \Hat{\alpha}^i=-y^{i,i}.
\]

The adjoint processes $Y_t^i=(Y_t^{i,j};-n_i\leq j\leq n_i)$ and $Z_t^i=(Z_t^{i,j,k};-n_i\leq j\leq n_i,k\in\mathbb{Z})$ for $i\in\mathbb{Z}$ are defined as the solutions of the system of  backward stochastic differential equations (BSDEs): for $i\in\mathbb{Z}$, $-n_i\leq j\leq n_i$
\begin{equation} \label{BSDE2sided}
\left\{
  \begin{array}{ll}
    {\mathrm d}Y_t^{i,j}
    &=-\partial_{x^j}H^i(X_t,Y_t^i,\alpha_t){\mathrm d}t+\sum\limits_ {k=-\infty}^\infty Z_t^{i,j,k}{\mathrm d}W_t^k\\
    &=-\bigg( \varepsilon p\,p_1 (X_t^{i+1}-X_t^i)(\delta_{j,i+1}-\delta_{j,i})+ \varepsilon (1-p)\,q_1 (X_t^{i}-X_t^{i-1})(\delta_{j,i}-\delta_{j,i-1}) \bigg){\mathrm d}t+\sum\limits_ {k=-\infty}^\infty Z_t^{i,j,k}{\mathrm d}W_t^k, \\
    Y_T^{i,j}&=\partial_{x^j}g_i(X_T)\,=\,c p\,p_1 (X_T^{i+1}-X_T^i)(\delta_{j,i+1}-\delta_{j,i}) + c (1-p)\,q_1 (X_T^{i}-X_T^{i-1})(\delta_{j,i}-\delta_{j,i-1});
      \end{array}
      \right.
\end{equation}
for $0 \leq t \leq T$. Particularly, for $j=i$, it becomes:
\begin{equation}\label{2sided BSDE i=j}
\left\{
  \begin{array}{ll}
     {\mathrm d} Y_t^{i,i} &=[\varepsilon p\,p_1(X_t^{i+1}-X_t^i)-\varepsilon (1-p)\,q_1(X_t^{i}-X_t^{i-1})]\,{\mathrm d}t+\sum\limits_ {k=-\infty}^\infty Z_t^{i,i,k}{\mathrm d}W_t^k\\
    &=[-\varepsilon \,\big(pp_1+(1-p)q_1\big)X_t^i+\varepsilon p p_1\, X_t^{i+1}+\varepsilon (1-p)q_1\, X_t^{i-1}]\,{\mathrm d}t+\sum\limits_ {k=-\infty}^\infty Z_t^{i,i,k}{\mathrm d}W_t^k,\\
    Y_T^{i,i}&=c\,\big(pp_1+(1-p)q_1\big) X_T^{i}-cpp_1\, X_T^{i+1}-c(1-p)q_1 \,X_T^{i-1}, 
 \end{array}
\right.
\end{equation}

\begin{claim} \label{eq: YZ identities}
In the case of a deterministic  two-sided directed chain, i.e. $p_0=q_0=0$, $p_1=q_1=1$ and $0<p<1$, we have for $\,i \in \mathbb Z\,$, 
\begin{equation}   \label{eq: YZ identities}
Y_{\cdot}^{i,i-1} + Y_{\cdot}^{i,i} + Y_{\cdot}^{i,i+1} \equiv 0   \, , \quad Z_{\cdot}^{i,i-1} + Z_{\cdot}^{i,i} + Z_{\cdot}^{i,i+1} \equiv 0  \, .
\end{equation}
This  is quite different from the one-sided directed chain case where the effect of player $\,i-1\,$ does not appear. 
\end{claim}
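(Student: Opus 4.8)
The plan is to read off the identities directly from the structure of the adjoint BSDE system \eqref{BSDE2sided}, exploiting that in the deterministic case $\,p_1=q_1=1\,$ the Hamiltonian $\,H^i\,$ and the terminal cost $\,g^i\,$ depend on the state only through the two consecutive increments $\,x^{i+1}-x^i\,$ and $\,x^i-x^{i-1}\,$. First I would note that the ``$\,\sum_k\alpha^k y^{i,k}+\tfrac12(\alpha^i)^2\,$'' part of $\,H^i\,$ carries no $\,x$-dependence, so that $\,\partial_{x^j}H^i=\partial_{x^j}g^i=0\,$ for every $\,j\notin\{i-1,i,i+1\}\,$; as in Remark \ref{rmk:finiteYcheck2sided} this forces $\,Y^{i,j}\equiv 0\,$ and $\,Z^{i,j,\cdot}\equiv 0\,$ for all such $\,j\,$, so only the three components $\,Y^{i,i-1},Y^{i,i},Y^{i,i+1}\,$ (and the corresponding $\,Z\,$'s) are possibly non-trivial.

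Next I would introduce the first-order operator $\,D:=\partial_{x^{i-1}}+\partial_{x^i}+\partial_{x^{i+1}}\,$ and observe that it annihilates every function of the form $\,G(x^{i+1}-x^i,\ x^i-x^{i-1})\,$: by the chain rule the three partials are $\,-G_2\,$, $\,-G_1+G_2\,$, $\,G_1\,$, whose sum is $\,0\,$. Hence $\,DH^i\equiv 0\,$ and $\,Dg^i\equiv 0\,$. Summing the BSDEs \eqref{BSDE2sided} over $\,j\in\{i-1,i,i+1\}\,$ and setting $\,S^i_t:=Y^{i,i-1}_t+Y^{i,i}_t+Y^{i,i+1}_t\,$, the drift of $\,S^i\,$ equals $\,-DH^i(X_t,Y^i_t,\alpha_t)=0\,$, so that
\begin{equation*}
{\mathrm d}S^i_t=\sum_{k=-\infty}^{\infty}\big(Z^{i,i-1,k}_t+Z^{i,i,k}_t+Z^{i,i+1,k}_t\big)\,{\mathrm d}W^k_t,\qquad S^i_T=Dg^i(X_T)=0 .
\end{equation*}
Thus $\,S^i\,$ is a martingale (the stochastic integrals are true martingales thanks to $\,\mathbb E\int_0^T|Z^{i,j,k}_t|^2\,{\mathrm d}t<\infty\,$ from well-posedness of the adjoint system), so $\,S^i_t=\mathbb E[S^i_T\mid\mathcal F_t]=0\,$ for all $\,t\in[0,T]\,$, which is the first identity. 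Plugging $\,S^i\equiv 0\,$ back into the displayed SDE kills the stochastic integral identically, giving $\,Z^{i,i-1,k}_t+Z^{i,i,k}_t+Z^{i,i+1,k}_t=0\,$ for a.e.\ $\,t\,$ and every $\,k\,$, i.e.\ the second identity.

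If one prefers to avoid the operator $\,D\,$, the same conclusion follows by simply adding the explicit drift coefficients of $\,{\mathrm d}Y^{i,i-1}_t,{\mathrm d}Y^{i,i}_t,{\mathrm d}Y^{i,i+1}_t\,$ read off from \eqref{BSDE2sided} and checking that the $\,\varepsilon p\,$-terms and the $\,\varepsilon(1-p)\,$-terms each cancel in pairs, and likewise that the three terminal values sum to $\,0\,$. There is essentially no obstacle here beyond bookkeeping; the only slightly delicate point is the passage from local to genuine martingale for $\,S^i\,$ (handled by the $\,L^2\,$-bound on $\,Z\,$) and the observation that the full sum $\,\sum_j Y^{i,j}\,$ reduces to the sum over $\,j\in\{i-1,i,i+1\}\,$.
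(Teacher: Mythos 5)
Your proposal is correct and follows essentially the same route as the paper: the key point in both is that the sum of the partial derivatives of $H^i$ and of $g^i$ over $j\in\{i-1,i,i+1\}$ vanishes (your operator $D$ is just a repackaging of the paper's explicit computation of $\partial_{i\pm1}H^i,\ \partial_i H^i$), so the summed BSDE has zero driver and zero terminal value, forcing both sums to vanish. You merely make explicit the martingale/uniqueness step that the paper leaves implicit in its phrase ``according to \eqref{BSDE2sided}''.
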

\begin{proof}
First, for the relation among player $\,i\,$ and players $\,i\pm 1\,$, note from \eqref{eq: Hamiltonian2sided} that   for each $\,i \in \mathbb Z\,$,
\begin{align*}
&\partial_{i} H^{i} \,: =\, \frac{\,\partial H^{i}\,}{\,\partial x^{i}\,} (x, y, \alpha)  \,= \, - \varepsilon p ( x^{i+1}- x^{i})  + \varepsilon (1-p)( x^{i} - x^{i-1}),\\
&\partial_{i+1} H^{i} \,: =\,\frac{\,\partial H^{i}\,}{\,\partial x^{i+1}\,} (x, y, \alpha) \,=\, \varepsilon p( x^{i+1}- x^{i}),\quad \partial_{i-1} H^{i} \,: =\, \frac{\,\partial H^{i}\,}{\,\partial x^{i-1}\,} (x, y, \alpha)\,= \, -\varepsilon (1-p) ( x^{i}- x^{i-1}), 
\end{align*}
and hence, 
\begin{equation*}
\partial_{i} H^{i} \, =\,  - (\partial _{i+1} H^{i} + \partial_{i-1} H^{i}) \,, \quad \text{ and } \quad  \partial_{i} g^{i} \, =\,  - (\partial _{i+1} g^{i} + \partial_{i-1} g^{i}) \, . 
\end{equation*}
Thus, (according to (\ref{BSDE2sided})), we claim that for $\,i \in \mathbb Z\,$, 
\begin{equation*}
Y_{\cdot}^{i,i-1} + Y_{\cdot}^{i,i} + Y_{\cdot}^{i,i+1} \equiv 0   \, , \quad Z_{\cdot}^{i,i-1} + Z_{\cdot}^{i,i} + Z_{\cdot}^{i,i+1} \equiv 0  \, , 
\end{equation*}
\end{proof}
\begin{remark} \label{rmk:finiteYcheck2sided}
We can also see from (\ref{BSDE2sided}) that $\, Y_{\cdot}^{i,j} \equiv 0 \,$, $\, Z_{\cdot}^{i,j,k} \equiv 0 \,$ whenever $\,j \neq i-1, i, i+1\,$. Thus there must be finitely many non-zero $Y^{i,j}$'s for every $i$.
\end{remark}

For each $\,i \in \mathbb Z\,$, we make the {\it ansatz } 
\begin{equation} \label{eq: ansatz}
Y_{t}^{i,i} \, =\, \sum_{k=-\infty}^{\infty} \phi_{t}^{i,k} X_{t}^{k} + \psi_{t}^{i} \, ; \quad i \in \mathbb Z \, , \, \, 0 \le t \le T \,, 
\end{equation}
where $\, (\phi_{\cdot}^{i,j}, i, j \in \mathbb Z )\,$, $\,(\psi^{i}_{\cdot}, i \in \mathbb Z ) \,$ are some differentiable deterministic functions satisfying terminal conditions: $\phi_{T}^{i,i}=c\,\big(pp_1+(1-p)q_1\big),\,\phi_{T}^{i,i+1}=-cpp_1,\,\phi_{T}^{i,i-1}=-c(1-p)q_1,\,\phi_{T}^{i,k}=0\,$otherwise and $\psi_{T}^{i}=0$ for $i\in \mathbb{Z}$; and
$\,\phi^{i,k}_{\cdot}\,$ is assumed to be shift invariant, that is, it depends only on the difference $\, k-i  \,$ but not on the values $i,k$ themselves. Substituting the {\it ansatz}, the optimal strategy $\hat{\alpha}^i$ and the forward equation for $X^i$ in \eqref{eq:2sidedSDE} become
\begin{equation} \label{eq: ControlandForwardEq}
    \left\{
  \begin{array}{ll}
    &\hat{\alpha}_t^i=-Y_t^{i,i}=-\, \sum\limits_{k=-\infty}^{\infty} \phi_{t}^{i,k} X_{t}^{k} - \psi_{t}^{i},\\
    & {\mathrm d}X_t^j=(-\, \sum\limits_{k=-\infty}^{\infty} \phi_{t}^{i,k} X_{t}^{k} - \psi_{t}^{i}) {\mathrm d}t+\sigma {\mathrm d}W_t^j.
  \end{array}
\right.
\end{equation}
Using the ``dot" notation for derivatives with respect to $\,t\,$ and differentiating the ansatz (\ref{eq: ansatz}) and substituting (\ref{eq: ControlandForwardEq}) leads to:
\begin{equation} \label{ito_2}
\begin{split}
{\mathrm d} Y_{t}^{i,i} \, &=\, \sum_{k=-\infty}^{\infty} \phi_{t}^{i,k} {\mathrm d} X_{t}^{k} + ( \dot{\psi}_{t}^{i} + \sum_{j=-\infty}^{\infty} \dot{\phi}^{i,j}_{t} X_{t}^{j} ) {\mathrm d}t \\
\, &=\, \Big [ \sum_{\ell=-\infty}^{\infty} ( - \sum_{k=-\infty}^{\infty}  \phi_{t}^{i,k} \phi_{t}^{k, \ell} + \dot{\phi}^{i,\ell}_{t} ) X_{t}^{\ell} + \dot{\psi}_{t}^{i} - \sum_{k=-\infty}^{\infty} \phi_{t}^{i,k} \psi_{t}^{k} \Big] {\mathrm d} t + \sigma \sum_{k=-\infty}^{\infty} \phi_{t}^{i,k} {\mathrm d} W_{t}^{k}.
\end{split}
\end{equation}
Comparing the finite variation and local martingale parts of the semimartingale decompositions ((\ref{2sided BSDE i=j}) and (\ref{ito_2})), we derive 
\begin{equation}
Z^{i,i,k}_{t}\equiv \sigma \phi^{i,k}_{t}   \, ; \quad 0 \le t \le T \, , \, i\in \mathbb{Z};
\end{equation}
and the following system of ordinary differential equations of Riccati type: 
\begin{equation} \label{eq: ODEs} 
\begin{split}
\dot{\psi}_{t}^{i} \, & =\,  \sum_{k=-\infty}^{\infty} \phi_{t}^{i,k} \psi_{t}^{k} \, , \\
\dot{\phi}_{t}^{i,j} \, &=\,  \sum_{k=-\infty}^{\infty} \phi_{t}^{i,k} \phi_{t}^{k, j} + \delta_{j, i+1} \cdot \varepsilon p\, p_1 -  \delta_{j,i}  \cdot \varepsilon \,\big(pp_1+(1-p)q_1\big)+ \delta_{j,i-1} \cdot   \varepsilon (1-p)\, q_1
\end{split}
\end{equation}
for $\, 0 \le t \le T \,$, $\, i,j \in \mathbb Z \,$ with the terminal conditions 
\begin{equation}  \label{eq: terminal conditions} 
\phi_{T}^{i,i} \, =\,  c\,\big(pp_1+(1-p)q_1\big) \,, \quad \phi_{T}^{i,i+1} \, =\, - c\,pp_1 \, , \quad \phi_{T}^{i,i-1} \, =\, - c\,(1-p)q_1 , \quad \phi^{i,j}_{T} \equiv 0 \,, \, \,  j \neq i-1, i, i+1 \, , 
\end{equation}
and $\,\psi_{T}^{i} \equiv 0 \,$ for $\, i \in \mathbb Z \,$.

\subsection{Discussion of the Riccati System \eqref{eq: ODEs}}
Since we make the ansatz \eqref{eq: ansatz} shift invariant, that is, $\,\phi^{i,k}_{\cdot} \, $ depends only on the difference $\,k-i\,$, we may write $ \phi^{i,k}_{\cdot}\, =\, \varphi^{k-i}_{\cdot} \,$ for some function $\,\varphi_{t}^{j}\,$, $\,j \in \mathbb Z\,$, $\, 0 \le t \le T\,$. Here,  note that the superscript $\,j\,$ is the index but not the power. Then we may rewrite \eqref{eq: ODEs} for $\,\phi^{i,k}_{\cdot }\,$ as the following ordinary differential equation for $\,\varphi_{\cdot}^{j}\,$, $\, j \in \mathbb Z\,$: 
\begin{equation}
\dot{\varphi}^{j}_{t} \, =\, \sum_{k=-\infty}^{\infty} \varphi_{t}^{k} \cdot \varphi_{t}^{j-k} + \delta_{j,1} \cdot \varepsilon p\,p_1 -  \delta_{j,0} \cdot \varepsilon \,\big(pp_1+(1-p)q_1\big)+ \delta_{j,-1} \cdot \varepsilon (1-p)\,q_1 \, ; \quad j \in \mathbb Z \, , \, \, 0 \le t \le T \, ,
\end{equation}
i.e., 
\begin{equation} \label{reduced ODE}
\left\{
\begin{array}{ll}
&\dot{\varphi}^{0}_{t} \, =\, \sum\limits_{k=-\infty}^{\infty} \varphi_{t}^{k} \cdot \varphi_{t}^{-k} - \varepsilon\,\big(pp_1+(1-p)q_1\big) ,\\
&\dot{\varphi}^{1}_{t} \, =\, \sum\limits_{k=-\infty}^{\infty} \varphi_{t}^{k} \cdot \varphi_{t}^{1-k}+\varepsilon p\,p_1, \\
&\dot{\varphi}^{-1}_{t} \, =\, \sum\limits_{k=-\infty}^{\infty} \varphi_{t}^{k} \cdot \varphi_{t}^{-1-k} +\varepsilon (1-p)\,q_1,\\
&\dot{\varphi}^{j}_{t} \, =\, \sum\limits_{k=-\infty}^{\infty} \varphi_{t}^{k} \cdot \varphi_{t}^{j-k}\quad \text{otherwise},
\end{array}
\right.
\end{equation}
with $\, \varphi^{0}_{T} \, =\, c\,\big(pp_1+(1-p)q_1\big)\,$, $\,\varphi^{-1}_{T} \, =\, - c(1-p)\,q_1\,$, $\,\varphi^{+1}_{T} \, =\, - cp\,p_1\,$, $\,\varphi^{j}_{T} \equiv 0 \,$, $\,j \neq -1, 0, 1 \,$. 
\begin{remark} \label{fullydierected_sum0}
According to equation \eqref{reduced ODE}, the sum $\, \sum\limits_{j=-\infty}^{\infty} \varphi^{j}_{t}\,$ satisfies 
\begin{equation}
\frac{\, {\mathrm d} \,}{\, {\mathrm d} t \,} \sum_{j=-\infty}^{\infty} {\varphi}^{j}_{t} \, =\, \big( \sum_{j=-\infty}^{\infty} {\varphi}^{j}_{t} \big)^{2}\, 
, \quad \sum_{j=-\infty}^{\infty} {\varphi}^{j}_{T} \, =\,  0 \, . 
\end{equation}
This ordinary differential equation  has a unique solution 
\begin{equation}
\sum_{j=-\infty}^{\infty} {\varphi}^{j}_{t} \, \equiv \, 0 \, ; \quad 0 \le t \le T \, . 
\end{equation} 
\end{remark}

\bigskip
The generating function $\,S_{t}(z) \, :=\, \sum\limits_{k=-\infty}^{\infty} z^{k} \varphi_{t}^{k} \,$, $\,z \in \mathbb C \setminus \{0\} \,$, if it is well defined (and the superscript $\,j\,$ of $\,z^{j}\,$ is the power), satisfies the one-dimensional Riccati equation 
\begin{equation}\label{Riccati}
\begin{split}
\dot{S}_{t}(z) \, &=\,  \sum_{j=-\infty}^{\infty} z^{j} \dot{\varphi}^{k}_{t} \, =\,  \sum_{j, k=-\infty}^{\infty} \varphi_{t}^{k} \varphi_{t}^{j-k} z^{j} + \frac{\,1\,}{\,z\,} \cdot \varepsilon (1-p) q_1+ z\cdot \varepsilon p\,p_1-\varepsilon\, \big(pp_1+(1-p)q_1\big)\\
 \,& =\,  \sum_{k=-\infty}^{\infty} \sum_{\ell=-\infty}^{\infty} \varphi_{t}^{k} \varphi_{t}^{\ell} z^{k+\ell} - \Big( 1 - \frac{\,1\,}{\,z\,} \Big) \varepsilon (1-p)q_1- ( 1- z) \varepsilon pp_1\\
 \,  & =\, [ S_{t}(z)]^{2} - \Big[  \Big( 1 - \frac{\,1\,}{\,z\,} \Big) \, \varepsilon (1-p)q_1 + ( 1- z) \, \varepsilon pp_1 \Big] \, \\
  \,  & =\, [ S_{t}(z)]^{2} -\varepsilon\, T(z)\, ; \quad z \in \mathbb C \, , \, \, 0 \le t \le T \, 
\end{split}
\end{equation}
with $\, S_{T}(z) \, =\, c\,T(z)$, where $T(z)=\, ( 1 - \frac{1}{z}) \, (1-p)q_1 + ( 1- z) \, pp_1\,$. 

\noindent $\,\bullet\,$ For $z^{\pm}=\, \frac{\big(pp_1+(1-p)q_1\big)\,\pm\, \sqrt{\big(pp_1+(1-p)q_1\big)^2-4pp_1\,(1-p)q_1}}{2pp_1}=1\,\,\text{or}\,\, \dfrac{(1-p)q_1}{pp_1}$, $T(z^{\pm})=0$. Then we get the ODE:
$\Dot{S}_t(z^{\pm})=(S_t(z^{\pm}))^2\, ,\,S_T(z^{\pm})=0.$
The solution is $S_t(z^{\pm})=0$. Then we can conclude:
\begin{equation*}
\, \sum_{k=-\infty}^{\infty} (z^{\pm})^{k} \varphi_{t}^{k} \,=\,0.
\end{equation*}
\noindent $\,\bullet\,$ For $\, z \in \mathbb C \setminus \{ 0 \}\,$ and $\,z\neq \, 1\,\,\text{or}\,\, \dfrac{(1-p)q_1}{pp_1}$, the solution $\, S_{t}(z) \,$ is given by 
\begin{equation} \label{eq: Stz-2}
\begin{split}
S_{t}(z) \, &=\, \sum_{k=-\infty}^{\infty} z^{k} \varphi^{k}_{t} \, =\, \mathfrak b(z) \cdot  \frac{\,  ( \mathfrak b(z) + \mathfrak q (z)) \cdot e^{ \mathfrak b(z) (T-t) } -  ( \mathfrak b(z) - \mathfrak q (z)) \cdot   e^{- \mathfrak b(z) (T-t) }\,}{\, ( \mathfrak b(z) + \mathfrak q (z)) \cdot e^{\mathfrak b(z) (T-t) } + ( \mathfrak b(z) - \mathfrak q (z)) \cdot e^{ - \mathfrak b(z) (T- t) } \,} \, \quad\xrightarrow[T\to \infty]{} \mathfrak b (z); \\
\mathfrak b (z)\, &:=\,\sqrt{\varepsilon\, T(z)}\,=\,  \Big[ \Big( 1 - \frac{\,1\,}{\,z\,} \Big) \, \varepsilon (1-p) q_1+ (1- z) \, \varepsilon p p_1\Big]^{1/2} \, , \\
\mathfrak q (z) \,  &:=\,c\, T(z)\,=\, \Big( 1 - \frac{\,1\,}{\,z\,} \Big) \, c(1-p) q_1+ ( 1- z) \, cpp_1 \, .
\end{split} 
\end{equation}
\subsubsection{Stationary Solution for Two-sided Directed Chain Game}
In this section, we want to see how the values $\,p\,,\,p_1\, ,\,q_1\,$ affect the game. For our analysis let us consider the limits $\, {\bm \phi}^{j}_{t} \, :=\,  \lim_{T \to \infty} \varphi^{j}_{t} \,$of $\,\varphi_{t}^{j}\,$ for $\,t \ge 0 \,$, $\, j \in \mathbb Z\,$, as $\, T \to \infty\,$ and take them as a stationary solution of \eqref{reduced ODE}. As $\, T \to \infty\,$, we have obtained from \eqref{eq: Stz-2} 
\[
\,\lim_{T\to \infty} \sum_{j=-\infty}^{\infty} z^{j} \varphi^{j}_{t} \, =\,  \lim_{T\to \infty} S_{t}(z) \, =\,   \mathfrak b(z)  \, =\,  \sum_{j=-\infty}^{\infty} z^{j} {\bm \phi}^{j}_{t}\,; \quad \, t \ge 0 \, , \, \, z \in \{ z : \mathfrak b (z) \in \mathbb R , \mathfrak b ( z) > 0 \} \,, 
\] 
where $\, \mathfrak b(\cdot) \,$ does not depend on $\, t \,$. Hence, the limit $\, {\bm \phi}^{j}_{t} \,$ does not depend on $\, t\,$, and we write it as constant $\, {\bm \phi}^{j}\,$ for every $\, j \in \mathbb Z\,$. Also, substituting this observation into \eqref{eq: ODEs} with $\, \psi_{T}^{i} = 0 \,$, we observe $\, \psi_{t}^{i} \equiv  0 \,$, and hence, we obtain a dynamics for the stationary equilibrium: 
\begin{equation} \label{eq: NE} 
{\mathrm d} X_{t}^{i} \, =\,  - \sum_{k=-\infty}^{\infty} {\bm \phi}^{i-k} X_{t}^{k} {\mathrm d} t + \sigma {\mathrm d} W_{t}^{i} \, ; \quad X_{0}^{i} \, =\,  \xi^{i} \, ; \quad i \in \mathbb Z \,, \quad t \ge 0  \, . 
\end{equation}
We shall identify the values $\,{\bm \phi}^{j}\,$, $\,j \in \mathbb Z\,$ and behaviors of $\, X_{\cdot}^{i}\,$, $\, i \in \mathbb Z \,$. 

The function $\, \mathfrak b(z) \,$ can be rewritten as 
\begin{equation} \label{eq: b 1D}
\begin{split} 
\mathfrak b(z) \, = &\, \sqrt{\varepsilon} \, \Big[ \Big( 1 - \frac{\,1\,}{\,z\,} \Big) \, (1-p)q_1  + (1- z) \,pp_1 \Big]^{1/2}\\ 
\, = &\, \sqrt{\varepsilon} \cdot \sqrt{pp_1+(1-p)q_1}\,\Big[ 1 - \Big( z \, \dfrac{p \,p_1}{pp_1+(1-p)q_1}+ \frac{\,1 \,}{\,z\,}\,\dfrac{(1-p)q_1}{pp_1+(1-p)q_1}\Big) \Big]^{1/2} \\
\,=&\, \sqrt{\varepsilon\,\big(pp_1+(1-p)q_1\big)} \,\Big[ 1 - \Big( z \, w+ \frac{\,v \,}{\,z\,}\Big) \Big]^{1/2}
\end{split}
\end{equation}
for $\, z \in \mathbb C \setminus \{ 0 \} \,\cap\{{ \frac{1\,\pm\, \sqrt{1-4wv}}{2w}}\} $ and define $w\,=\, \frac{pp_1}{pp_1+(1-p)q_1}\,,\, v\,=\,\frac{(1-p)q_1}{pp_1+(1-p)q_1}$, where $0<p<1,\,0\leq p_1\leq 1$ and $0\leq q_1\leq 1$. First, by inequalities, we have $wv=\dfrac{pp_1(1-p)q_1}{\big(pp_1+(1-p)q_1\big)^2}\,\in\,[0,\frac{1}{4}].$

\noindent $\bullet$ When $wv=0$, i.e. $p_1=0$ or $q_1=0$, we get back to Section \ref{section random chain}, one direction random chain game. For example, when $q_1=0$, each player is interacted with her/his neighbor with a probability of $pp_1$.

\noindent $\bullet$ In the case when $wv\,=\, pp_1\,(1-p)q_1 \,\in\,(0, \frac{1}{4}]\,$, i.e. $0<p_1\leq 1$ and $0<q_1\leq 1$, we expand formally
\begin{equation}
\begin{split}
\mathfrak b(z) \, = & \,  \sqrt{\varepsilon\,\big(pp_1+(1-p)q_1\big)} \,\sum_{i=0}^{\infty} {1/2 \choose i } ( -1)^{i} \Big ( z \, w +  \frac{\,  v \, }{z} \Big)^{i} \\
\, =& \,\sqrt{\varepsilon\,\big(pp_1+(1-p)q_1\big)} \, \sum_{i=0}^{\infty} {1/2 \choose i } (-1)^{i} \sum_{k=0}^{i} {i \choose k} w^{k} v^{i-k} z^{2 k - i } \, =\,  \sum_{j=-\infty}^{\infty} z^{j} {\bm \phi}^{j}\, , 
\end{split}
\end{equation}
and hence, comparing the coefficients of $\,z^{j}\,$ and letting $B=\,pp_1+(1-p)q_1\,$, we obtain  
\begin{equation} \label{eq: bphi1}
\begin{split}
\,{\bm \phi}^{0} \, = & \,\sqrt{\varepsilon\,B} \,\sum_{\ell=0}^{\infty} {1/2 \choose 2 \ell } { 2 \ell  \choose \ell  } ( -1)^{2\ell} w ^{\ell} v^{\ell} \, = \,  _{2} F_{1} ( - 1/4, 1/4, 1 , 4 wv ) \,, \\
\end{split}
\end{equation}
\begin{equation} \label{eq: bphi2}
\begin{split}
{\bm \phi}^{j} \, = &\, \sqrt{\varepsilon\,B} \,\sum_{\ell=0}^{\infty} {1/2 \choose 2 \ell + j } { 2 \ell + j \choose \ell + j } ( -1)^{2\ell + j} w ^{\ell +j} v^{\ell}  \\
\, =&\,\sqrt{\varepsilon\,B} \, (-1)^{j} w^{j} {1/2 \choose j }\,  \, _{2} F_{1} \Big( - \frac{\,1\,}{\,4\,} + \frac{\,j\,}{\,2\,}, \frac{\,1\,}{\,4\,} + \frac{\,j\,}{\,2\,}, 1 + j , 4 wv \Big)\, ,  \\
{\bm \phi}^{-j} \, = &\, \sqrt{\varepsilon\,B} \,\sum_{\ell=0}^{\infty} {1/2 \choose 2 \ell + j } { 2 \ell + j \choose  \ell } ( -1)^{2\ell + j} w ^{\ell +j} v^{\ell} \\
\, = & \, \sqrt{\varepsilon\,B} \,( -1)^{j} w^{j} v^{ \frac{\,1\,}{\,1\,} - \,j\,} {1/2 \choose j } \cosh \Big( \Big( j - \frac{\,1\,}{\,2\,} \Big) \tanh^{-1} ( \sqrt{wv } ) \Big) \,  
\end{split}
\end{equation}
for $\, j \ge 1\,$, where $\,\tanh^{-1}(\cdot) \,$ is the inverse hyperbolic tangent function and $\, _{2}F_{1}(\cdot)\,$ is the hypergeometric function defined by 
\[
_{2} F_{1}(a_{1}, a_{2}; b_{1} ; z) \, :=\, \sum_{n=0}^{\infty} \frac{\,(a_{1})_{n} \cdot  (a_{2})_{n}\,}{\,(b_{1})_{n} \, \cdot \, n ! \,} \cdot z^{n} \, ; \quad z \in \mathbb C 
\]
with the rising factorial $\, (a)_{0} \, =\,  1\,$, $\, (a)_{n} \, =\,  a(a+1) \cdots (a+n-1) \,$ for $\, a \in \mathbb C\,$, $\, n \ge 1\,$.

\subsubsection{Special Case: Catalan Markov Chain of the Deterministic Two-sided Chain Game}

When the chain is deterministic, i.e., $\,p_1\,=\,q_1\,=\,1$, the stationary solution is give in \eqref{eq: bphi1} - \eqref{eq: bphi2} by taking $\,w=\frac{pp_1}{pp_1+(1-p)q_1}=p\,$, $\,v=\frac{(1-p)q_1}{pp_1+(1-p)q_1}=1-p\,$ and $B=pp_1+(1-p)q_1=1$. 

\begin{remark}
When the chain is deterministic and the interaction is symmetric, i.e. $\,p_1\,=\,q_1\,=\,1$ and $p=\frac{1}{2}$, solutions \eqref{eq: bphi1} - \eqref{eq: bphi2} suggest to take 
$\, w=\frac{pp_1}{pp_1+(1-p)q_1} = 1/2 \,$ and $\, v=\frac{(1-p)q_1}{pp_1+(1-p)q_1} = 1/2\,$, and we obtain simpler forms: 
\begin{equation} \label{eq: bphi3}
{\bm \phi}^{0} \, =\,  \frac{\,2 \sqrt{ 2\,  \varepsilon } \,}{\,\pi \,} \, , \quad {\bm \phi}^{j} \, =\, (-1)^{j} \cdot \sqrt{\frac{\,2 \varepsilon\,}{\,\pi \,}} { 1/2 \choose j } \frac{\,\Gamma ( 1 + j ) \,}{\, \Gamma ( (3 + 2 j ) / 2) \,} \, , 
\end{equation}
\[
{\bm \phi}^{-j} \, =\, (-1)^{j} \cdot \frac{\,\sqrt{\varepsilon} ( 3 j + \sqrt{3} ) \,}{\,2 \sqrt{2}\, 3^{j}\,} \, {1/2 \choose j } \, ; \quad j \ge 1 \, . 
\]
\end{remark}

\bigskip

Coming back to general $\, p \in (0, 1) \,$, we have by numerical evaluation,
\[
{\bm \phi}^{0} \, >\,  0  \, , \quad {\bm \phi}^{j} < 0 \, , \quad j \in \mathbb Z \, , 
\]
and hence, \eqref{eq: NE} can be seen as a linear evolution equation. Without loss of generality, we assume $\varepsilon=1$ and $\sigma=1$. Since we have the relation : $\sum\limits_{k=-\infty}^\infty {\bm \phi}^{k}=0$ in Remark \ref{fullydierected_sum0}, we can consider a continuous-time Markov chain $M(\cdot)$ in the state space $\, \mathbb Z\,$ with generator matrix $\mathbf{Q}\,=\,-\,\left( \begin{array}{cccccccc} 
\ddots & \ddots & \ddots &\ddots  & \ddots&\ddots  & \ddots\\
\ddots & \cdots &  {\bm \phi}^{-1} & {\bm \phi}^0 & {\bm \phi}^1  & \ddots & \ddots & \ddots\\
\ddots & {\bm \phi}^{-k}  &  \cdots & {\bm \phi}^{-1}  & {\bm \phi}^0 & {\bm \phi}^1  & \ddots & \ddots\\
\ddots & {\bm \phi}^{-(k+1)}  & \cdots & \cdots  & {\bm \phi}^{-1}  & {\bm \phi}^0 & {\bm \phi}^1 &  \ddots \\
& \ddots & \ddots &\ddots  & \ddots&\ddots  & \ddots & \ddots\\
\end{array} 
\right)$. 
The infinite particle system \eqref{eq: NE} can be represented as a stochastic evolution equation:
\begin{equation}  \label{eqn:evolution}
{\mathrm d}\mathbf{X}_t=\mathbf{Q\,X}_t{\mathrm d}t+{\mathrm d}\mathbf{W}_t,
\end{equation}
where $\mathbf{X}_{\cdot}=(X_{\cdot}^{k},k\in \mathbb{Z})$ with $\,\mathbf{X_0}={\bm \xi} \, :=\, (\xi^{i}, i \in \mathbb Z ) \,$, $\,{\bm W}_{\cdot} \, :=\, ( W_{\cdot}^{i}, i \in \mathbb Z ) \,$. The solution is:
\begin{equation}
\mathbf{X}_t={\bm \xi}e^{t\mathbf{Q}}+ \int_0^t e^{(t-s)\mathbf{Q}}{\mathrm d}\mathbf{W}_s ;\quad t\geq 0,
\end{equation}
where $\, e^{u {\mathbf{Q}}}\,$, $\,u \ge 0 \,$ forms the semigroup induced by the continuous-time Markov chain with the generator $\,\mathbf{Q} \,$ and the transition probability matrix function $\, p_{i,j}(t)\,$, $\, i, j \in \mathbb Z\,$, $\, t \ge 0 \,$ in the state space $\, \mathbb Z\,$. Without loss of generality, let us assume $\mathbf{X}_0=\mathbf{0}$. With these transition probability matrix function, we may write the solution of \eqref{eq: NE} as 
\begin{equation}
X_{t}^{i} \, =\int^{t}_{0} \sum_{j=-\infty}^{\infty} { p}_{i,j}(t-s) {\mathrm d}W_{s}^{j} \, ; \quad i \in \mathbb Z \, , \, \, t \ge 0 \, .  
\end{equation} 

The variance of $\, X_{t}^{i}\,$ is given by 
\begin{equation}
\text{Var} ( X_{t}^{i}) \, = \int^{t}_{0} \sum_{j=-\infty}^{\infty} [ {p}_{i,j}(t-s)]^{2}  {\mathrm d} s < \infty \, . 
\end{equation}

\begin{prop}\label{sol_markovctwo}
The Gaussian process $\, X_{t}^{0} \,$,
 $\, t \ge 0 \,$ in \eqref{eqn:evolution}, corresponding to the (Catalan) Markov chain, is given by
\begin{equation} \label{solution_mc}
\begin{split}
{X}_{t}^{0}  \, &=\, \sum\limits_{j=-\infty}^{\infty}\int^{t}_{0}   (\exp ( Q (t-s)))_{0,j} {\mathrm d} {W}_s^j \,\\
&=\sum\limits_{\ell=0}^\infty\sum\limits_{m=-\ell}^\ell \,\int^{t}_{0}\frac{(t-s)^{4\ell} F^{(2\ell)}(-(t-s)^2)}{(2\ell)!}\,\binom{2\ell}{\ell+m}\,p^{\ell+m}(1-p)^{\ell-m}{\mathrm d} {W}_s^{2m}\\
&+\sum\limits_{\ell=0}^\infty \sum\limits_{m=-(\ell+1)}^\ell\,\int^{t}_{0} \frac{(t-s)^{4\ell+2} F^{(2\ell+1)}(-(t-s)^2)}{(2\ell+1)!}\, \binom{2\ell+1}{\ell+1+m}\,p^{\ell+1+m}(1-p)^{\ell-m}  {\mathrm d} {W}_s^{2m+1}\,,
\end{split}
\end{equation}
where $\, {W}_{\cdot}^{j}  \,$, $ j \in \mathbb Z¸ \,$ are independent standard Brownian motions and $\,F^{(k)}(x) \, =\,  \rho_{k}(x) e^{ - \sqrt{-x}}\,$, with
\[
\rho_k(x)=\frac{1}{2^k} \sum\limits_{j=k}^{2k-1}\, \frac{(j-1)!}{(2j-2k)!!(2k-j-1)!} \, (-x)^{\,-\frac{j}{2}},\quad \textit{for} \quad k\geq 1.
\]
and $\rho_{0}(x) \, =\,  1$ for $x \le 0 $.    
\end{prop}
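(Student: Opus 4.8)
The plan is to follow the route used for the one-sided chain in the proof of Proposition~\ref{sol_markovc_random}: first compute the transition kernel of the Catalan Markov chain $\,M(\cdot)\,$ generated by $\,\mathbf{Q}\,$ by passing to generating functions, and then insert the result into the stochastic-integral representation of the mild solution of \eqref{eqn:evolution}. The matrix $\,\mathbf{Q}\,$ is shift invariant with $\,\mathbf{Q}_{i,j}=-{\bm \phi}^{\,j-i}\,$, and by Remark~\ref{fullydierected_sum0} together with the sign pattern $\,{\bm \phi}^{0}>0\,$, $\,{\bm \phi}^{j}<0\,$ $(j\neq 0)\,$, we have $\,\sum_{k}\lvert {\bm \phi}^{k}\rvert = 2{\bm \phi}^{0}<\infty\,$; hence $\,\mathbf{Q}\,$ is a bounded, conservative generator, $\,e^{t\mathbf{Q}}\,$ is again a Toeplitz operator, and its entries $\,p_{i,j}(t)=(e^{t\mathbf{Q}})_{i,j}=p_{0,j-i}(t)\,$ are the Laurent coefficients of the symbol, i.e.
\[
\sum_{j=-\infty}^{\infty} p_{0,j}(t)\, z^{j}\,=\,\exp\!\big(-\,t\,\mathfrak b(z)\big)\,,\qquad \lvert z\rvert=1\,,
\]
where, in the deterministic two-sided case $\,p_1=q_1=1\,$ (so $\,B=1\,$, $\,w=p\,$, $\,v=1-p\,$) and with $\,\varepsilon=1\,$, formula \eqref{eq: b 1D} gives $\,\mathfrak b(z)=\big(1-pz-(1-p)z^{-1}\big)^{1/2}\,$; all the series converge absolutely on the circle, and the single branch point at $\,z=1\,$, where $\,\mathfrak b(1)=0\,$, is harmless.

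Next I expand the symbol. Set $\,F(x):=e^{-\sqrt{-x}}\,$, so that $\,F^{(k)}(x)=\rho_{k}(x)\,e^{-\sqrt{-x}}\,$ with $\,\rho_{k}\,$ the rational function of $\,\sqrt{-x}\,$ in \eqref{eq: rhokxProp1_random} — this is exactly the explicit $\,k\,$-th derivative of $\,e^{-\sqrt{-x}}\,$ already obtained in the proof of Proposition~\ref{sol_markovc_random}. With $\,u:=pz+(1-p)z^{-1}\,$ one has $\,\exp(-t\,\mathfrak b(z))=F\!\big(-t^{2}(1-u)\big)=F(-t^{2}+t^{2}u)\,$, and Taylor expanding $\,F\,$ about $\,-t^{2}\,$ gives
\[
\exp\!\big(-\,t\,\mathfrak b(z)\big)\,=\,\sum_{k=0}^{\infty}\frac{t^{2k}}{k!}\,F^{(k)}(-t^{2})\,\big(pz+(1-p)z^{-1}\big)^{k}\,,
\]
which is legitimate since $\,F^{(k)}(-t^{2})\ge 0\,$ and $\,\sum_{k}\frac{t^{2k}}{k!}F^{(k)}(-t^{2})=F(0)=1\,$, while $\,\lvert u\rvert\le 1\,$ on $\,\lvert z\rvert=1\,$ (Abel summation covers the point $\,z=1\,$, where $\,u=1\,$). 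Expanding $\,u^{k}=\sum_{r=0}^{k}\binom{k}{r}p^{r}(1-p)^{k-r}z^{2r-k}\,$ and separating $\,k=2\ell\,$ (giving the even powers $\,z^{2m}\,$, $\,m=r-\ell\,$) from $\,k=2\ell+1\,$ (giving the odd powers $\,z^{2m+1}\,$, $\,m=r-\ell-1\,$), I collect the coefficient of $\,z^{j}\,$ for each $\,j\in\mathbb Z\,$; this yields precisely the two double sums of \eqref{solution_mc}, e.g.\ $\,p_{0,2m}(t)=\sum_{\ell\ge 0}\frac{t^{4\ell}}{(2\ell)!}F^{(2\ell)}(-t^{2})\binom{2\ell}{\ell+m}p^{\ell+m}(1-p)^{\ell-m}\,$, and the analogous sum with $\,2\ell+1\,$ for $\,j=2m+1\,$.

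Finally, since $\,\mathbf{X}_{0}=\mathbf{0}\,$, the mild solution of \eqref{eqn:evolution} is $\,X_{t}^{0}=\int_{0}^{t}\sum_{j}p_{0,j}(t-s)\,{\mathrm d}W_{s}^{j}\,$; substituting the expansion above with $\,t\,$ replaced by $\,t-s\,$ gives \eqref{solution_mc}. The interchange of the (doubly indexed) summation with the stochastic integral is justified in $\,L^{2}\,$ by It\^o's isometry together with the bound $\,\sum_{j}\int_{0}^{t}[p_{0,j}(t-s)]^{2}\,{\mathrm d}s=\Var(X_{t}^{0})<\infty\,$ recorded just above the statement, so the partial sums form an $\,L^{2}\,$-Cauchy sequence with limit $\,X_{t}^{0}\,$.

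The conceptual core — that the symbol of $\,e^{t\mathbf{Q}}\,$ is $\,\exp(-t\mathfrak b)\,$ and that $\,\exp(-t\mathfrak b)=F(-t^{2}(1-u))\,$ Taylor expands cleanly in $\,u\,$ — is short; the main work is the combinatorial bookkeeping of the even/odd index splitting in the second step and, more delicately, making the series manipulations rigorous: absolute convergence of the bi-infinite Laurent series on $\,\lvert z\rvert=1\,$, the behaviour at the branch point $\,z=1\,$ (controlled by Abel's theorem, since $\,F(0^{-})=1\,$ is finite), and the fact that the exponential of the bi-infinite Toeplitz matrix $\,\mathbf{Q}\,$ is computed entrywise via its symbol. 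Alternatively one can avoid the last point by \emph{defining} $\,p_{0,j}(t)\,$ through the double sums and checking directly that they solve Kolmogorov's forward equation $\,\dot p_{0,j}=-\sum_{k}p_{0,k}\,{\bm \phi}^{\,j-k}\,$ with $\,p_{0,j}(0)=\delta_{0,j}\,$ — which at the generating-function level is just $\,\partial_{t}e^{-t\mathfrak b(z)}=-\mathfrak b(z)\,e^{-t\mathfrak b(z)}\,$ — and then invoking uniqueness of the semigroup generated by the bounded conservative generator $\,\mathbf{Q}\,$.
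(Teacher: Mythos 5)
Your proposal is correct and takes essentially the same route as the paper's Appendix~\ref{appendix_CatalanMarkovChainTwo}: the key steps there are exactly your Taylor expansion of $F(x)=e^{-\sqrt{-x}}$ about $-t^{2}$ combined with the binomial expansion of the mixture (the paper does this with the matrices $pB^{*}+(1-p)B_{*}$ and the identity $\mathbf{Q}^{2}=I-(pB^{*}+(1-p)B_{*})$, you do it equivalently at the level of the Toeplitz symbol $e^{-t\mathfrak b(z)}$ with $u=pz+(1-p)z^{-1}$), followed by the same even/odd split and substitution into the stochastic-integral representation. The only difference is that you supply convergence and interchange justifications (absolute convergence on $\lvert z\rvert=1$, Abel/monotone summation at the branch point, the $L^{2}$ interchange via It\^o isometry) that the paper carries out formally, which is a welcome but not structurally different addition.
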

\begin{proof}
Given in \textbf{Appendix} \ref{appendix_CatalanMarkovChainTwo}.
\end{proof}

\section{Random Directed Tree Game}\label{section-tree-model}
Motivated by the discussion about the deterministic directed infinite tree game in Feng, Fouque \& Ichiba \cite{fengFouqueIchiba2020linearquadratic}, we now look at a random tree structure.
The connection and similarity between random and non-random cases is illustrated in Corollary \ref{random-deterministic cases}.

\subsection{Setup and Assumptions}\label{random tree model}
We describe a stochastic game on a directed tree where the interaction between every two players in the neighboring generation is random. All players have a fixed number of potential players in the next generation to interact with, denoted by a finite positive integer $M$. That is, for $n,k\geq 1$, player $(n,k)$ is the $k$-th individual of the $n$-th generation and she can interact with the players in the $(n+1)$-th generation labelled as $\{(n+1,M(k-1)+j): 1\leq j\leq M\}$. We introduce the {\it i.i.d.} binary random variables $N^{n,k,M(k-1)+j}$, valued in $\{0,1\}$,  which represent the random interaction between player $(n,k)$ and player $(n+1,M(k-1)+j)$ for $1\leq j\leq M$, present with probability  $p$, where $0<p<1$. When $N^{n,k,M(k-1)+j}$ is zero, we assume player $(n,k)$ has no interaction with player $(n+1,M(k-1)+j)$ for $1\leq j\leq M$.
We assume the dynamics of the states of the players are given by the stochastic differential equations of the form:
\begin{equation}\label{dynamic-random-tree}
    {\mathrm d}X_t^{n,k}=\a_t^{n,k}  {\mathrm d}t+\s  {\mathrm d}W_t^{n,k}, \quad 0\leq t\leq T,
\end{equation}
where $(W_t^{n,k})_{0\leq t\leq T},n,k\geq 1$ are one-dimensional independent standard Brownian motions. We assume that the diffusion is one-dimensional and the diffusion coefficients are constant and identical denoted by $\sigma>0$. The drift coefficients $\alpha^{n,k}$'s are adapted to the filtration of the Brownian motions and satisfy $\mathbbm{E}[\int_0^T |\alpha_t^{n,k}|^2 dt]<\infty$. 
The system starts at time $t = 0$ from $i.i.d.$ square-integrable random variables $X_0^{n,k} = \xi_{n,k}$ independent of the Brownian motions and, without loss of generality,  we assume ${\mathbbm E}(\xi_{n,k}) = 0$ for every pair of $(n,k)$. 

In this model, each player $({n,k})$ chooses its own strategy $\alpha^{n,k}$ in order to minimize its objective function of the form:

\begin{align}\label{firstObjective}
J^{n,k}(\boldsymbol{\alpha})=
&\mathbbm{E}_{N,X} \bigg\{ \displaystyle \int_0^T \bigg(\frac{1}{2}(\a^{n,k}_t)^2\\
&\quad\quad\quad+\frac{\e}{2}\Big(\dfrac{1}{\sum\limits_{j=1}^M N^{n,k, M(k-1)+j}}\,\,\sum_{j=1}^{M}N^{n,k, M(k-1)+j}\,X_t^{n,k, M(k-1)+j}-X_t^{n,k}\Big)^2\cdot \mathbbm{1}_{\sum\limits_{j=1}^M N^{n,k, M(k-1)+j}\neq 0}\bigg) {\mathrm d}t \nonumber\\
&\quad\quad\quad+\frac{c}{2}\Big(\dfrac{1}{\sum\limits_{j=1}^M N^{n,k, M(k-1)+j}}\,\,\sum_{j=1}^{M}N^{n,k, M(k-1)+j}\,X_T^{n,k, M(k-1)+j}-X_T^{n,k}\Big)^2\cdot \mathbbm{1}_{\sum\limits_{j=1}^M N^{n,k, M(k-1)+j}\neq 0}\bigg\}. \nonumber
\end{align}
for some constants $\e>0$, $c\geq 0$ and $\boldsymbol{\alpha}=(\alpha^{n,k}: n\geq 1,1\leq k\leq M^{n-1})$ with $\alpha^{n,k}\in \mathbb{R}$. When the player has no connection with any player in the next generation, her insentive is to choose $\alpha^{n,k}=0$.

Conditioning on $\sum\limits_{j=1}^M N^{n,k, M(k-1)+j}=d_{n,k}$ where $0\leq d_{n,k}\leq M$, and denoting $p_{d_{n,k}}=P(\sum\limits_{j=1}^M N^{n,k, M(k-1)+j}=d_{n,k})=\displaystyle \binom{M}{d_{n,k}}p^{d_{n,k}}\,(1-p)^{M-d_{n,k}}$, we get
\begin{align}
J^{n,k}(\boldsymbol{\alpha})
=\mathbbm{E}_{X} \bigg\{ \displaystyle \int_0^T \bigg(\frac{1}{2}(\a^{n,k}_t)^2
&+\frac{\e}{2}\sum_{d_{n,k}=1}^M p_{d_{n,k}}\cdot
 \dfrac{1}{\binom{M}{d_{n,k}}}\,\sum\limits_{I\in S_{d_{n,k}}} \Big(\frac{1}{d_{n,k}}\sum_{j\in I} X_t^{n+1,j}-X_t^{n,k}\Big)^2\bigg) {\mathrm d}t\\
&+\frac{c}{2}\sum_{d_{n,k}=1}^M p_{d_{n,k}}\cdot\dfrac{1}{\binom{M}{d_{n,k}}}\, \sum\limits_{I\in S_{d_{n,k}}} \Big(\frac{1}{d_{n,k}}\sum_{j\in I}X_T^{n+1,j}-X_T^{n,k}\Big)^2\bigg\}, \nonumber
\end{align}
where $S_{d_{n,k}}=\{(i_1,\cdots,i_{d_{n,k}}):M(k-1)+1\leq i_1<\cdots<i_{d_{n,k}}\leq Mk\}$ denotes the set of all possible combinations of $d_{n,k}$ elements between $M(k-1)+1$ and $Mk$ with an increasing order.

\subsection{Open-Loop Nash Equilibrium} \label{Section 1}
We search for an open-loop Nash equilibrium of the directed random tree system among strategies $\{\a^{n,k};n\geq 1, k\geq 1\}$.
The Hamiltonian for player $(n,k)$ is of the form:
\begin{align*}
&H^{n,k}(x^{m,l},y^{n,k;m,l},\a^{m,l};m\geq 1, 1\leq l\leq M^{m-1})
=\displaystyle\sum\limits_{m=1}^{N_n}\,\sum\limits_{l=1}^{M^{m-1}} \a^{m,l}y^{n,k;m,l}
+
 \frac{1}{2}(\a^{n,k})^2
 \\
&\quad\quad\quad\quad+\displaystyle \frac{\e}{2}\sum_{d_{n,k}=1}^M p_{d_{n,k}}\cdot
 \dfrac{1}{\binom{M}{d_{n,k}}}\,\sum\limits_{I\in S_{d_{n,k}}}\,\Big(\frac{1}{d_{n,k}}\sum_{j\in I} x^{n+1,j}-x^{n,k}\Big)^2,
\end{align*}
assuming it is defined on $Y^{n,k}_t$'s where {\it only finitely many}  $Y^{n,k;m,l}_t$'s are non-zero for every given $(n,k)$. Here, $N_n$ represents a depth of this finite dependence, a finite number depending on $n$ with $N_n>n$ for $n \ge 1$. This assumption is checked in Remark \ref{finitecheck_random tree} below. Thus, the Hamiltonian $H^{n,k}$ for player $(n,k)$ is well defined for $n \ge 1$.

The adjoint processes $Y_t^{n,k}=(Y_t^{n,k;m,l};m\geq 1, 1\leq l\leq M^{m-1})$ and $Z_t^{n,k}=(Z_t^{n,k;m,l;p,q};m,p\geq 1, 1\leq l\leq M^{m-1},1\leq q\leq M^{p-1})$ for $n\geq 1,1\leq k\leq M^{n-1}$ are defined as the solutions of the backward stochastic differential equations (BSDEs):
\begin{align}
 {\mathrm d}Y_t^{n,k;m,l}&=-\partial_{x^{m,l}}H^{n,k} (X_t,Y_t^{n,k},\a_t){\mathrm d}t+ \displaystyle \sum\limits_{p=1}^\infty \sum\limits_{q=1}^{M^{p-1}} Z_t^{n,k;m,l;p,q}{\mathrm d}W_t^{p,q} \label{eq: last BSDEs}\\
     &=-\e\displaystyle \sum_{d_{n,k}=1}^M p_{d_{n,k}}\cdot
 \dfrac{1}{\binom{M}{d_{n,k}}}\,\sum\limits_{I\in S_{d_{n,k}}}\,\Big(\frac{1}{d_{n,k}}\sum_{j\in I}\,X_t^{n+1,j}-X_t^{n,k}\Big) \Big(\frac{1}{d_{n,k}}\displaystyle\sum_{j\in I}\,\delta_{(m,l),(n+1,j)}-\delta_{(m,l),(n,k)}\Big){\mathrm d}t\nonumber \\
    &\quad+ \displaystyle \sum\limits_{p=1}^\infty \sum\limits_{q=1}^{M^{p-1}} Z_t^{n,k;m,l;p,q}{\mathrm d}W_t^{p,q},\nonumber
    \end{align}
    with terminal condition: 
    \begin{align*}
    Y_T^{n,k;m,l}
    & = c\cdot\displaystyle \sum_{d_{n,k}=1}^M p_{d_{n,k}}\cdot
 \dfrac{1}{\binom{M}{d_{n,k}}}\,\sum\limits_{I\in S_{d_{n,k}}}\,\Big(\frac{1}{d_{n,k}}\sum_{j\in I}\,X_T^{n+1,j}-X_T^{n,k} \Big)\Big(\frac{1}{d_{n,k}}\displaystyle\sum_{j\in I}\,\delta_{(m,l),(n+1,j)}-\delta_{(m,l),(n,k)}\Big).
\end{align*}

\begin{remark}\label{finitecheck_random tree}
For every $(m,l)\neq (n,k)$ or $(n+1,i)$ for $M(k-1)+1\leq i \leq Mk$, $ {\mathrm d}Y_t^{n,k;m,l}=\sum\limits_{p=1}^\infty \sum\limits_{q=1}^{M^{p-1}} Z_t^{n,k;m,l;p,q}{\mathrm d}W_t^{p,q}$ and $Y_T^{n,k;m,l}=0$ implies $Z_t^{n,k;m,l;p,q}=0$ for all $(p,q)$. Thus there are finitely many non-zero $Y^{n,k;m,l}$'s for every $(n,k)$ and the Hamiltonian can be rewritten as

\begin{equation}
\begin{split} 
H^{n,k}&(x^{m,l},y^{n,k;n,k},y^{n,k;n+1,i},\a^{m,l};m\geq 1, 1\leq l\leq M^{m-1}, M(k-1)+1\leq i\leq Mk)\\
=&\a^{n,k}y^{n,k;n,k}+\sum_{i=M(k-1)+1}^{Mk} \a^{n+1,i}y^{n,k;n+1,i}
+\frac{1}{2}(\a^{n,k})^2
\\
& \hspace{3cm} {} +\displaystyle\frac{\e}{2}\sum_{d_{n,k}=1}^M p_{d_{n,k}}\cdot
 \dfrac{1}{\binom{M}{d_{n,k}}}\,\sum\limits_{I\in S_{d_{n,k}}}\,\Big(\frac{1}{d_{n,k}}\sum_{j\in I}\, x^{n+1,j}-x^{n,k}\Big)^2.
\end{split} 
\end{equation}
\end{remark}

\begin{remark}
For every $(n,k)$, we will solve \eqref{eq: last BSDEs} when $(m,l)=(n,k)$ in the following discussion. Other non-zero $Y^{n,k;m,l}$'s in \eqref{eq: last BSDEs} are solvable with the similar method.

\end{remark}

When $(m,l)=(n,k)$, \eqref{eq: last BSDEs}  becomes:
\begin{equation} \label{BSDEsum_tree}
 \left\{
    \begin{array}{ll}
        {\mathrm d}Y_t^{n,k;n,k}&=\e\displaystyle \sum_{d_{n,k}=1}^M p_{d_{n,k}}\cdot
 \dfrac{1}{\binom{M}{d_{n,k}}}\,\sum\limits_{I\in S_{d_{n,k}}}\,\Big(\frac{1}{d_{n,k}}\sum_{j\in I}\,X_t^{n+1,j}-X_t^{n,k}){\mathrm d}t +\sum_{p=1}^\infty \sum_{q=1}^{M^{p-1}} Z_t^{n,k;n,k;p,q}{\mathrm d}W_t^{p,q},\\
    Y_T^{n,k;n,k}&=-c\displaystyle \sum_{d_{n,k}=1}^M p_{d_{n,k}}\cdot
 \dfrac{1}{\binom{M}{d_{n,k}}}\,\sum\limits_{I\in S_{d_{n,k}}}\,\Big(\frac{1}{d_{n,k}}\sum_{j\in I}\, X_T^{n+1,j}-X_T^{n,k}\Big).
    \end{array}
    \right.
\end{equation}

To simplify the equation system, we use the result: for all $d_{n,k}\in [M(k-1)+1, Mk]$
\begin{align*}
\quad &p_{d_{n,k}}\cdot
 \dfrac{1}{\binom{M}{d_{n,k}}}\,\sum\limits_{I\in S_{d_{n,k}}}\,\frac{1}{d_{n,k}}\sum_{j\in I}\,x^{n+1,j}\,=
p_{d_{n,k}}\,\frac{1}{d_{n,k}}\,  \dfrac{1}{\binom{M}{d_{n,k}}}\,
\sum\limits_{I=(i_1,\cdots,i_{d_{n,k}})\in S_{d_{n,k}} }\,(x^{n+1,i_1}+\cdots+x^{n+1,i_{d_{n,k}}})\\
&\quad\quad\quad =p_{d_{n,k}}\,\frac{1}{d_{n,k}}\,  \dfrac{1}{\binom{M}{d_{n,k}}}\,
\cdot \binom{M-1}{d_{n,k}-1}
 \,(x^{n+1,M(k-1)+1}+\cdots+x^{n+1,Mk}) \\
 &\quad\quad\quad = p_{d_{n,k}}\frac{1}{M}\,
 \,\sum\limits_{j=M(k-1)+1}^{Mk} x^{n+1,j},
\end{align*}
which gives:
\begin{align*}
\quad &\sum_{d_{n,k}=1}^M p_{d_{n,k}}\cdot
 \dfrac{1}{\binom{M}{d_{n,k}}}\,\sum\limits_{I\in S_{d_{n,k}}}\,\frac{1}{d_{n,k}}\sum_{j\in I}\,x^{n+1,j}
 =(1-p_0)\frac{1}{M}\,
 \,\sum\limits_{j=M(k-1)+1}^{Mk} x^{n+1,j}.
\end{align*}

Then we can rewrite system \eqref{BSDEsum_tree} as:
\begin{equation}\label{BSDE-tree}
 \left\{
    \begin{array}{ll}
        {\mathrm d}Y_t^{n,k;n,k}&=\e(1-p_0) \displaystyle \Big(\frac{1}{M} \sum_{j=M(k-1)+1}^{Mk}X_t^{n+1,j}-X_t^{n,k}\Big){\mathrm d}t +\sum_{p=1}^\infty \sum_{q=1}^{M^{p-1}} Z_t^{n,k;n,k;p,q}{\mathrm d}W_t^{p,q},\\
    Y_T^{n,k;n,k}&=-c(1-p_0)\displaystyle \Big(\frac{1}{M} \sum_{j=M(k-1)+1}^{Mk}X_T^{n+1,j}-X_T^{n,k} \Big).
    \end{array}
    \right.
\end{equation}

By minimizing the Hamiltonian with respect to $\a^{n,k}$, we can get an open-loop Nash equilibrium:  $\hat{\a}^{n,k}=-y^{n,k;n,k}$ for all $(n,k)$. Considering the BSDE system, we make the ansatz of the form:
\begin{equation}\label{ansatz-tree}
 Y_t^{n,k;n,k}=\sum\limits_{i=n}^\infty\sum\limits_{j=M^{i-n}(k-1)+1}^{M^{i-n}k} \phi_t^{n,k;\,i,j}X_t^{i,j},
\end{equation}
for some deterministic scalar function $\phi_t$ depending on $(n,k)$. 
According to \eqref{BSDE-tree}, the functions satisfy the terminal conditions: 
\begin{align*}
\noindent \,\bullet\,&\phi_T^{n,k;\,n,k}=c(1-p_0); \\
\noindent \,\bullet\,&\phi_T^{n,k;n+1,j}=-c\displaystyle (1-p_0)\frac{1}{M} ,\,\text{ for }\,M(k-1)+1\leq j\leq Mk;\\
\noindent \,\bullet\,&\phi_T^{n,k;\,n+\ell,j}=0,\quad \text{for } \ell\geq 2,M^{\ell}(k-1)+1 \leq j\leq M^\ell k.
\end{align*}

Using the ansatz, the optimal strategy $\hat{\alpha}^{n,k}$  and the forward equation for $X_{\cdot}^{n,k}$ in (\ref{dynamic-random-tree}) become:
\begin{equation}\label{particle_system_tree}
 \left\{
  \begin{array}{ll}
    \hat{\a}_t^{n,k}&=-Y_t^{n,k;n,k}=- \displaystyle \sum\limits_{i=n}^\infty\sum\limits_{j=M^{i-n}(k-1)+1}^{M^{i-n}k} \phi_t^{n,k;\,i,j}X_t^{i,j},\\
     {\mathrm d}X_t^{n,k}&=- \displaystyle\sum\limits_{i=n}^\infty\sum\limits_{j=M^{i-n}(k-1)+1}^{M^{i-n}k} \phi_t^{n,k;\,i,j}X_t^{i,j} {\mathrm d}t+\s  {\mathrm d}W_t^{n,k},
  \end{array}
  \right.
\end{equation}
which gives: for $0 \le t \le T$
\[
{\mathrm d}X_t^{i,j}= - \sum\limits_{r=i}^\infty\sum\limits_{s=M^{r-i}(j-1)+1}^{M^{r-i}j} \phi_t^{i,j;\,r,s}\,X_t^{r,s} {\mathrm d}t+\s {\mathrm d}W_t^{i,j}.
\]

Define a set $S^{u,v}=\{i:M^u(v-1)+1\leq i\leq M^uv,\, i \in \mathbb{N}\}$. Differentiating the ansatz (\ref{ansatz-tree}) and substituting \eqref{particle_system_tree}, we obtain:
\begin{equation}\label{ito-tree}
 \begin{split}
    {\mathrm d}Y_t^{n,k;n,k}&=\sum\limits_{i=n}^\infty\sum\limits_{j\in S^{i-n,k}}(\dot{\phi}_t^{n,k;\,i,j}X_t^{i,j}{\mathrm d}t+\phi_t^{n,k;\,i,j}{\mathrm d} X_t^{i,j})\\
    &=\sum\limits_{i=n}^\infty\sum\limits_{j\in S^{i-n,k}}\dot{\phi}_t^{n,k;\,i,j}X_t^{i,j}{\mathrm d}t+\sum\limits_{i=n}^\infty\sum\limits_{j\in S^{i-n,k}} \phi_t^{n,k;\,i,j} \big[ - \sum\limits_{r=i}^\infty\sum\limits_{s\in S^{r-i,j}}\phi_t^{i,j:\,r,s}X_t^{r,s} {\mathrm d}t+\s {\mathrm d}W_t^{i,j}\big]\\
    &=\sum\limits_{i=n}^\infty\sum\limits_{j\in S^{i-n,k}}\dot{\phi}_t^{n,k;\,i,j}X_t^{i,j}{\mathrm d}t-\sum\limits_{i=n}^\infty \sum\limits_{r=i}^\infty\sum\limits_{j\in S^{i-n,k}}\,\sum\limits_{s\in S^{r-i,j}} \phi_t^{n,k;\,i,j}  \phi_t^{i,j;\,r,s}X_t^{r,s} {\mathrm d}t+\s \sum\limits_{i=n}^\infty\sum\limits_{j\in S^{i-n,k}}\phi_t^{n,k;\,i,j} {\mathrm d}W_t^{i,j}\\
     &\stackrel{\text{def}}{=}\text{I}  -\text{II}+\text{III}.
 \end{split}
\end{equation}
For the first and third terms, we have 
\begin{align*}
\text{I}&=\sum\limits_{i=n}^\infty\sum\limits_{j\in S^{i-n,k}}\dot{\phi}_t^{n,k;\,i,j}X_t^{i,j}{\mathrm d}t= \sum\limits_{r=n}^\infty\sum\limits_{s=M^{r-n}(k-1)+1}^{M^{r-n}k} \dot{\phi}_t^{n,k;\,r,s}X_t^{r,s}{\mathrm d}t;\\
\text{III}&=\s \sum\limits_{i=n}^\infty\sum\limits_{j\in S^{i-n,k}}\phi_t^{n,k;\,i,j} {\mathrm d}W_t^{i,j}=\s\sum\limits_{r=n}^\infty\sum\limits_{s=M^{r-n}(k-1)+1}^{M^{r-n}k} \phi_t^{n,k;\,r,s} {\mathrm d}W_t^{r,s}. 
\end{align*}

Then, for the second term, we have 
\begin{align*}
\text{II}&=\sum\limits_{i=n}^\infty \sum\limits_{r=i}^\infty\sum\limits_{j\in S^{i-n,k}}\,\sum\limits_{s\in S^{r-i,j}} \phi_t^{n,k;\,i,j}  \phi_t^{i,j;\,r,s}X_t^{r,s} {\mathrm d}t=\sum\limits_{r=n}^\infty\sum\limits_{s=M^{r-n}(k-1)+1}^{M^{r-n}k}\,\bigg(\sum\limits_{i=n}^r  \phi_t^{n,k;\,i,\left\lceil \frac{s}{M^{r-i}}\right\rceil}  \phi_t^{i,\left\lceil \frac{s}{M^{r-i}}\right\rceil;\,r,s}\bigg)X_t^{r,s} {\mathrm d}t,\\
&\text{where} \left\lceil x\right\rceil\,\text{ denotes here the smallest integer greater than or equal to}\, x. 
\end{align*}

Thus  equation \eqref{ito-tree} can be written as:
\begin{align}
{\mathrm d}Y_t^{n,k;n,k}& = \text{I}  -\text{II}+\text{III}\label{ito-tree-second}\\
&=\sum\limits_{r=n}^\infty\sum\limits_{s=M^{r-n}(k-1)+1}^{M^{r-n}k}\bigg(\dot{\phi}_t^{n,k;\,r,s}- \sum\limits_{i=n}^r  \phi_t^{n,k;\,i,\left\lceil \frac{s}{M^{r-i}}\right\rceil}  \phi_t^{i,\left\lceil \frac{s}{M^{r-i}}\right\rceil;\,r,s}\bigg)X_t^{r,s} {\mathrm d}t+\s\sum\limits_{r=n}^\infty\sum\limits_{s=M^{r-n}(k-1)+1}^{M^{r-n}k} \phi_t^{n,k;\,r,s} {\mathrm d}W_t^{r,s}.\nonumber
\end{align}

Now comparing the two It\^o's decompositions (\ref{BSDE-tree}) and (\ref{ito-tree-second}), we obtain first the processes 
$Z_t^{n,k;n,k;p,q}$ from the martingale terms :
\begin{equation*}
   Z_t^{n,k;n,k;p,q}=\s\phi_t^{n,k;\,p,q} \text{ for } p\geq n \text{ and } M^{p-n}(k-1)+1\leq q\leq M^{p-n}k\, ;\quad Z_t^{n,k;n,k;p,q}=0,  \text{ otherwise}.
\end{equation*}

Then we obtain from the drift terms:
\begin{flalign}
	\noindent \,\bullet\,     \quad 
    &\Dot{\phi}_t^{n,k;\,n,k}
      = \phi_t^{n,k;\,n,k}\phi_t^{n,k;\,n,k}-\e(1-p_0), \quad\quad\phi_T^{n,k;\,n,k}=c\displaystyle (1-p_0);\label{eqn:tmp1}\\
      &{\Longrightarrow} \phi_t^{n,k;\,n,k}\equiv \phi_t^{i,j;\,i,j} \, \text{ for any pairs } \, (n,k),\,(i,j);\nonumber &&
  \end{flalign}
\begin{flalign}
    \noindent \,\bullet\,& \text{for }\, M(k-1)+1\leq \ell\leq Mk,\nonumber\\
    &\quad \Dot{\phi}_t^{n,k;\,n+1,\ell}
    = \phi_t^{n,k;\,n,k}\phi_t^{n,k;\,n+1,\ell}+\phi_t^{n,k;\,n+1,\ell}\phi_t^{n+1,\ell;\,n+1,\ell}+\e\displaystyle (1-p_0)\frac{1}{M} \nonumber &&\\
    &\quad\quad\quad\quad\quad \stackrel{\eqref{eqn:tmp1}}{=} 2 \phi_t^{n,k;\,n,k}\phi_t^{n,k;\,n+1,\ell}+\e\displaystyle (1-p_0)\frac{1}{M},\quad\quad  \phi_T^{n,k;\,n+1,\ell}=-\displaystyle c(1-p_0)\frac{1}{M} ;\label{eqn:tmp2}
    \end{flalign}
\begin{flalign}
\noindent \,\bullet\, & \text{for }\, m\geq n+2,\,M^{m-n}(k-1)+1\leq \ell\leq M^{m-n}k,\nonumber\\
&\quad  \Dot{\phi}_t^{n,k;\,m,\ell}=\sum\limits_{i=n}^{m} \phi_t^{n,k;\,i,\left\lceil \frac{\ell}{M^{m-i}}\right\rceil}  \phi_t^{i,\left\lceil \frac{\ell}{M^{m-i}}\right\rceil;\,m,\ell} , \quad\quad \phi_T^{n,k;\,m,\ell}=0. \label{eqn:tmp3}&&
\\\nonumber
\end{flalign}

\subsection{Discussion about the Solution}
\begin{remark}
Since by definition, $p_0=(1-p)^M=\mathbb{E}\big[\mathbbm{1}_{\{ \sum\limits_{j=1}^M N^{n,k, M(k-1)+j}\neq 0 \} }\big]$ for any $(n,k)$, the above equation system \eqref{eqn:tmp1}-\eqref{eqn:tmp3} depends on $p$ and $M$.
\end{remark}

\begin{theorem} \label{theorem: 1}
The solution of the system \eqref{eqn:tmp1}-\eqref{eqn:tmp3} are independent of $n,k$ and depend on the "depth" $i$, i.e.,  
$$\phi_t^{n,k;\,n+i,\ell}=\phi_t^{m,j;\,m+i,\tilde{\ell}}, \quad t \ge 0 $$ for every suitable pairs $(n,k)$, $(m,j)$ and every suitable $i, \ell, \widetilde{\ell} \in \mathbb N_{0}$. Thus the system is closed for $\{\phi_t^{n,k;\,m,\ell},\,m\geq n,\, M^{m-n}(k-1)+1\leq \ell\leq M^{m-n}k\}$ and the solutions exist. 
\end{theorem}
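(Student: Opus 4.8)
The plan is to prove the claim by induction on the \emph{depth} $i = m-n$, using at each level the uniqueness of solutions to the terminal-value ordinary differential equations \eqref{eqn:tmp1}--\eqref{eqn:tmp3}: once the right-hand side and the terminal condition of the equation governing $\phi_t^{n,k;\,m,\ell}$ are shown to be free of the labels $(n,k)$ and $\ell$ and to depend only on $i$, it follows that $\phi_t^{n,k;\,m,\ell}$ itself depends only on $i$. For the base case $i=0$, equation \eqref{eqn:tmp1} is the scalar Riccati equation $\dot\phi_t^{n,k;\,n,k} = (\phi_t^{n,k;\,n,k})^2 - \e(1-p_0)$ with $\phi_T^{n,k;\,n,k}=c(1-p_0)$, whose data do not involve $(n,k)$ at all; since $c\ge 0$ and $\e>0$ it is globally solvable on $[0,T]$ with the explicit solution obtained exactly as in Lemma \ref{inf_sumo} (with $p\e$ replaced by $\e(1-p_0)$), and by uniqueness the common solution, call it $\phi_t^0$, is the same for every $(n,k)$. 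For $i=1$, with $\phi^0$ now known, \eqref{eqn:tmp2} is the linear terminal-value problem $\dot\phi_t^{n,k;\,n+1,\ell} = 2\phi_t^0\,\phi_t^{n,k;\,n+1,\ell} + \e(1-p_0)/M$, $\phi_T^{n,k;\,n+1,\ell}=-c(1-p_0)/M$, again independent of $(n,k,\ell)$ and hence possessing a unique global solution $\phi_t^1$ on $[0,T]$.

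For the inductive step, fix $d\ge 2$ and assume that for every depth $r$ with $0\le r\le d-1$ there is a function $\phi^r_\cdot$ such that $\phi_t^{a,b;\,a+r,\ell}=\phi_t^r$ for all admissible $(a,b)$ and all admissible $\ell$. Take $m=n+d$ and $\ell\in S^{d,k}$ in \eqref{eqn:tmp3}. In the sum $\sum_{q=n}^{m}\phi_t^{n,k;\,q,\lceil \ell/M^{m-q}\rceil}\,\phi_t^{q,\lceil \ell/M^{m-q}\rceil;\,m,\ell}$, the term $q=n$ contributes $\phi_t^{n,k;\,n,k}\,\phi_t^{n,k;\,m,\ell}=\phi_t^0\,\phi_t^{n,k;\,m,\ell}$ (using $\lceil \ell/M^{d}\rceil=k$), and the term $q=m$ contributes $\phi_t^{n,k;\,m,\ell}\,\phi_t^{m,\ell;\,m,\ell}=\phi_t^{n,k;\,m,\ell}\,\phi_t^0$ by \eqref{eqn:tmp1} applied at the node $(m,\ell)$; for each intermediate $q$ with $n<q<m$, the node $(q,\lceil \ell/M^{m-q}\rceil)$ is the generation-$q$ ancestor of $(m,\ell)$, so the first factor has depth $q-n\in\{1,\dots,d-1\}$ and the second has depth $m-q\in\{1,\dots,d-1\}$, whence the induction hypothesis replaces them by $\phi_t^{q-n}$ and $\phi_t^{m-q}$. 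The equation therefore becomes
\begin{equation*}
\dot\phi_t^{n,k;\,m,\ell} \;=\; 2\phi_t^0\,\phi_t^{n,k;\,m,\ell} \;+\; \sum_{j=1}^{d-1}\phi_t^{j}\,\phi_t^{d-j}, \qquad \phi_T^{n,k;\,m,\ell}=0,
\end{equation*}
whose right-hand side and terminal value do not depend on $(n,k,\ell)$. Being once more a linear terminal-value problem with continuous (already constructed) coefficients, it has a unique global solution on $[0,T]$; calling it $\phi_t^d$ completes the induction and yields $\phi_t^{n,k;\,n+i,\ell}=\phi_t^{m,j;\,m+i,\widetilde\ell}$ for all suitable indices.

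This argument simultaneously shows that the infinite coupled system \eqref{eqn:tmp1}--\eqref{eqn:tmp3} over all pairs $(n,k)$ collapses to the single recursively defined family $\{\phi_t^d:d\ge 0\}$ — the scalar Riccati equation for $d=0$, then the linear equations for $\phi^d$ ($d\ge 1$) whose coefficient $2\phi_t^0$ and forcing term $\sum_{j=1}^{d-1}\phi^j\phi^{d-j}$ involve only $\phi^0,\dots,\phi^{d-1}$ — so the system is closed, and since each equation in this family is globally solvable on $[0,T]$ (the $d=0$ case by the explicit formula, the cases $d\ge 1$ by variation of parameters), the solutions exist.

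The only delicate point is the bookkeeping with the ceiling indices $\lceil \ell/M^{m-q}\rceil$: one must verify that, as $q$ runs from $n$ to $m$, these trace out the ancestral line of $(m,\ell)$ — equivalently that $\ell\in S^{\,m-q,\,\lceil \ell/M^{m-q}\rceil}$ — so that the two factors in the generic summand of \eqref{eqn:tmp3} are genuine $\phi$'s of depths $q-n$ and $m-q$, and that the endpoint terms $q=n$ and $q=m$ both collapse to the single function $\phi^0$ via \eqref{eqn:tmp1}. Once this indexing is established, every step reduces to a routine invocation of uniqueness for ODEs.
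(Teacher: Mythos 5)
Your proof is correct and follows essentially the same route as the paper: solve the depth-$0$ Riccati equation, then reduce each deeper equation to a first-order linear terminal-value problem whose data are label-independent, using the ceiling-index bookkeeping to identify the ancestral nodes and invoking ODE uniqueness at each level. The only difference is presentational — you organize the recursion as a clean induction on depth with the explicit reduced equation $\dot\phi_t^{d}=2\phi_t^{0}\phi_t^{d}+\sum_{j=1}^{d-1}\phi_t^{j}\phi_t^{d-j}$, whereas the paper works out depths $0,1,2$ explicitly and then argues recursively.
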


\begin{proof}
First, \eqref{eqn:tmp1} is a simple Riccati equation for $\phi^{n,k;n,k}$ and it is  independent of $(n,k)$. Thus, its solution $\phi_t^{n,k;\,n,k}$ exists uniquely for every $(n,k)$ with $ \phi_t^{n,k;\,n,k}\equiv \phi_t^{m,j;\,m,j}$, $t \ge 0 $ for any suitable pairs $(n,k),\,(m,j)$. This is depth $0$.

Next, substituting $ \phi_t^{n,k;\,n,k}\equiv \phi_t^{m,j;\,m,j}$ into the first line of \eqref{eqn:tmp2}, we see for every $M(k-1)+1\leq \ell\leq Mk$,  \eqref{eqn:tmp2} is a first-order linear differential equation for $\phi^{n,k ; n+1, \ell}$ and it depends only on $p_0$, $\varepsilon$ and $M$ but not on $(n,k)$. Thus, we claim that the solution $\{\phi_t^{n,k;\,n+1,\ell+M(k-1)}, 1\leq \ell\leq M\}$ of \eqref{eqn:tmp2} exists uniquely. They are identical among the depth $1$, i.e.,  
\begin{equation}\label{eqn:phi^(n,n+1) relation}
\quad \phi_t^{n,k;\,n+1,\ell+M(k-1)}\equiv \phi_t^{m,j;\,m+1,\tilde{\ell}+M(j-1)} \quad \text{for }\quad 1\leq \ell,\tilde{\ell}\leq M.
\end{equation}
For $m=n+2$ and $1\leq \tilde{\ell}\leq M^{2}$ in \eqref{eqn:tmp3}, we have the derivative of the function ${\phi}_t^{n,k;\,n+2,\tilde{\ell}+M^{2}(k-1)}$ of depth $2$: 
\begin{align*}
\Dot{\phi}_t^{n,k;\,n+2,\tilde{\ell}+M^{2}(k-1)}&=\sum\limits_{i=n}^{n+2} \phi_t^{n,k;\,i,\left\lceil \frac{\tilde{\ell}}{M^{n+2-i}}\right\rceil +M^{i-n}(k-1)}  \phi_t^{i,\left\lceil \frac{\tilde{\ell}}{M^{n+2-i}}\right\rceil +M^{i-n}(k-1);\,n+2,\tilde{\ell}+M^{2}(k-1)}\\
&=\phi_t^{n,k;\,n,k}\,\phi_t^{n,k;\,n+2,\tilde{\ell}+M^{2}(k-1)}+ \phi_t^{n,k;\,n+1,\left\lceil \frac{\tilde{\ell}}{M}\right\rceil+M(k-1)}  \phi_t^{n+1,\left\lceil \frac{\tilde{\ell}}{M}\right\rceil+M(k-1);\,n+2,\tilde{\ell}+M^{2}(k-1)}\\
&\quad{}+ \phi_t^{n,k;\,n+2, \tilde{\ell}+M^{2}(k-1)}  \phi_t^{n+2, \tilde{\ell}+M^{2}(k-1);\,n+2,\tilde{\ell}+M^{2}(k-1)}\\
&=2\phi_t^{n,k;\,n,k}\,\phi_t^{n,k;\,n+2,\tilde{\ell}+M^{2}(k-1)}+ \phi_t^{n,k;\,n+1,\left\lceil \frac{\tilde{\ell}}{M}\right\rceil+M(k-1)}  \phi_t^{n+1,\left\lceil \frac{\tilde{\ell}}{M}\right\rceil +M(k-1);\,n+2,\tilde{\ell}+M^{2}(k-1)}, 
\end{align*}
where  the last term has the function of the depth $1$: 
\begin{align*}
\phi_t^{n+1,\left\lceil \frac{\tilde{\ell}}{M}\right\rceil +M(k-1);\,n+2,\tilde{\ell}+M^{2}(k-1)}
&=\phi_t^{n+1,\left\lceil \frac{\tilde{\ell}}{M}\right\rceil +M(k-1);\,n+2,\tilde{\ell}^{\prime}+M\big[\left\lceil \frac{\tilde{\ell}}{M}\right\rceil +M(k-1)-1\big]}\\
&=\phi_t^{n,k;\,n+1,\tilde{\ell}^{\prime}+M(k-1)}\quad\text{according to equation }\eqref{eqn:phi^(n,n+1) relation},
\end{align*}
where $\tilde{\ell}^{\prime}=\tilde{\ell}+M-M\left\lceil \frac{\tilde{\ell}}{M}\right\rceil $ satisfies $1 \leq \tilde{\ell}^{\prime}\leq M$. Thus, the differential equation for $\phi_t^{n,k;\,n+2,\tilde{\ell}+M^{2}(k-1)}$ is reduced to a first order linear differential equation depending on the functions $\phi_t^{n,k;\,n,k}$ and $\phi_t^{n,k;\,n+1,\tilde{\ell}+M(k-1)}$  of depths $\,1, 2\,$. We claim the solutions $\{\phi_t^{n,k;\,n+2,\tilde{\ell}+M^{2}(k-1)},\, 1\leq \tilde{\ell}\leq M^2\}$ of depth $2$ exist and are identical. 

When $m>n+2$ in  \eqref{eqn:tmp3}, we proceed the discussion recursively by using a similar method, that is, we can reduce the differential equation for the function $\phi_t^{n,k;\,m,\tilde{\ell}+M^{m-n}(k-1)}$ of depth $m-n$ to a first order linear differential equation depending on functions $\{\phi_t^{n,k;\,i,\tilde{\ell}+M^{i-n}(k-1)},n\leq i<m, 1\leq \tilde{\ell}\leq M^{i-n}\}$ of shallower depths less than $m-n$. Then we verify  that the solutions $\{\phi_t^{n,k;\,m,\tilde{\ell}+M^{m-n}(k-1)},\, 1\leq \tilde{\ell}\leq M^{m-n}\}$ of \eqref{eqn:tmp3} exist uniquely and identical among the depth $m-n$ for every given $m > n + 2$. 

From above, we can conclude the solution $\phi^{n, \cdot; m, \cdot}$ of the system exist, depending only on the "depth" $m-n$. Therefore, the system \eqref{eqn:tmp1}-\eqref{eqn:tmp3} can be reduced to a closed system for $\{\phi_t^{n,k;\,m,\ell},\,m\geq n,\, M^{m-n}(k-1)+1\leq \ell\leq M^{m-n}k\}$.
\end{proof}

\begin{remark}\label{random-deterministic cases}
If we assume that each individual $(n,k)$ is in interaction with its all potential players in the $(n+1)$-th generation, i.e., $p=1,\,p_0=0$, the model becomes the same as the system of the deterministic tree model in in Feng, Fouque \& Ichiba \cite{fengFouqueIchiba2020linearquadratic}. 

\begin{proof}
When $N_{n,k}\equiv M (\,>0)$, the functions only depend on the depth. Thus equations \eqref{eqn:tmp1}-\eqref{eqn:tmp3} above become
\begin{align*}
&\dot{\Psi}_t^{m}: = \dot{\phi}_t^{n,k;\,n+m,\ell}=\sum\limits_{i=n}^{n+m} \Psi_t^{i-n}\cdot \Psi_t^{n+m-i}-\delta_{m,0}\e+\delta_{m,1}\e\frac{1}{M}=\sum\limits_{i=0}^{m}\Psi_t^{i}\cdot \Psi_t^{m-i}-\delta_{m,0}\e+\delta_{m,1}\e\frac{1}{M},\\
& \Psi_T^{m}=\delta_{m,0}c-\delta_{m,1}c\frac{1}{M}.
\end{align*}
It is the same as the Riccati system of the deterministic tree model in Feng, Fouque \& Ichiba \cite{fengFouqueIchiba2020linearquadratic} with $M=d$.\\
\end{proof}
\end{remark}

As a consequence of Theorem \ref{theorem: 1}, the infinite-player stochastic game on the random tree model has an open-loop Nash equilibrium: 

\begin{prop} An open-loop Nash equilibrium for the infinite-player stochastic game on the random tree with cost functionals \eqref{firstObjective} is determined by  \eqref{particle_system_tree}, where $\{ \phi^{n,k;i,j}, i \geq n,\, M^{i-n}(k-1)+1\leq j\leq M^{i-n}k \} $ are the unique solution to the infinite system \eqref{eqn:tmp1}-\eqref{eqn:tmp3} of Riccati equations.  
\end{prop}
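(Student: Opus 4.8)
The plan is to argue by \emph{verification}, along the same lines as the corresponding statements in Sections~\ref{section random chain} and~\ref{section 3}, using the sufficiency part of the Pontryagin stochastic maximum principle player by player. By Theorem~\ref{theorem: 1}, the reduced system \eqref{eqn:tmp1}--\eqref{eqn:tmp3} has a unique solution $\{\phi_t^{n,k;\,m,\ell}\}$ on $[0,T]$, which is deterministic and bounded there, so the feedback functions appearing in the ansatz \eqref{ansatz-tree} are well defined. The first step is to check that the coupled forward system in \eqref{particle_system_tree} admits a unique strong solution and that the candidate controls $\hat\alpha^{n,k}_t=-\sum_{i\ge n}\sum_{j}\phi_t^{n,k;\,i,j}X_t^{i,j}$ are admissible, i.e.\ $\mathbb{E}\big[\int_0^T|\hat\alpha^{n,k}_t|^2\,{\mathrm d}t\big]<\infty$. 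Since the drift of $X^{n,k}$ involves only the states of the descendants of $(n,k)$, I would construct the solution as an $L^2$-limit of the solutions of the finite subsystems obtained by truncating at generation $n+N$ and letting $N\to\infty$; the required uniform bounds follow from the observation in Remark~\ref{random-deterministic cases} that the generation sums $\Psi_t^m:=\sum_{\ell}\phi_t^{n,k;\,n+m,\ell}$ satisfy a closed Catalan-type ODE with $\sum_m\Psi_t^m\equiv 0$, together with the finite second moments of the resulting Gaussian states, in the spirit of Propositions~\ref{sol_markovc_random}--\ref{sol_markovctwo}.

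Next I would establish the Nash property. Fix a player $(n,k)$, keep every other player at the candidate strategy $\hat\alpha^{m,l}$, and let $\alpha^{n,k}$ be an arbitrary admissible control for player $(n,k)$, producing a state $X'$ with $X'^{m,l}=X^{m,l}$ for all $(m,l)\neq(n,k)$; the point is that in the directed tree player $(n,k)$'s control enters only the dynamics of $X^{n,k}$, whereas the descendant states $X^{n+1,j}$, which are the only other coordinates entering $J^{n,k}$, obey open-loop equations not involving $X^{n,k}$ or $\alpha^{n,k}$, hence are unchanged by the deviation. Player $(n,k)$ therefore faces a strictly convex linear--quadratic control problem for the single scalar state $X^{n,k}$ with exogenous (random) cost coefficients. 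Because $f^{n,k}$ is strictly convex in $\alpha^{n,k}$ and jointly convex in $x$, and $g^{n,k}$ is convex in $x$, the Pontryagin conditions --- which hold for $\hat\alpha^{n,k}$ by construction of the adjoint processes $(Y^{n,k},Z^{n,k})$ solving \eqref{BSDE-tree} and of the first-order condition $\partial_{\alpha^{n,k}}H^{n,k}=y^{n,k;n,k}+\alpha^{n,k}=0$ --- are also sufficient (cf.\ \cite{carmona2013forwardbackward}). Concretely, I would apply It\^o's formula to $t\mapsto \sum_{m,l}Y_t^{n,k;m,l}\big(X_t'^{m,l}-X_t^{m,l}\big)$, which by Remark~\ref{finitecheck_random tree} is a \emph{finite} sum, and combine it with the convexity inequalities for $f^{n,k}$ and $g^{n,k}$: all martingale terms vanish in expectation, the Hamiltonian terms cancel against $\partial_{\alpha^{n,k}}H^{n,k}=0$, and one is left with $J^{n,k}(\alpha^{n,k};\hat\alpha^{-(n,k)})-J^{n,k}(\hat{\boldsymbol{\alpha}})\ge 0$. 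Since $(n,k)$ is arbitrary, $\hat{\boldsymbol{\alpha}}$ is an open-loop Nash equilibrium.

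The main obstacle will be the infinite-dimensionality underlying both steps: one must justify that the ansatz series \eqref{ansatz-tree} and the It\^o expansion \eqref{ito-tree}--\eqref{ito-tree-second} converge in $L^2$ uniformly on $[0,T]$ and may be rearranged and differentiated term by term --- this is precisely where the Catalan-type decay of $\{\Psi_t^m\}$, the factor $1/M$ in the terminal data of \eqref{eqn:tmp2}, and the finite-variance estimate are needed --- and one must also ensure that the reduction of \eqref{firstObjective} to the averaged cost via $p_0=(1-p)^M$, i.e.\ the exchange of $\mathbb{E}_N$ with the optimization, is legitimate, which is built into the specification of the game. A convenient way to sidestep the convergence difficulties is to prove the statement first for the finite subtree truncated at generation $n+N$, where the classical sufficient maximum principle applies verbatim, and then pass to the limit $N\to\infty$, using the uniform $L^2$ bounds from the first paragraph to control the tails of the cost functionals.
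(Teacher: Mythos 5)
Your proposal is correct and follows essentially the same route as the paper: the paper offers no separate proof of this proposition, treating it as a direct consequence of the Pontryagin maximum principle derivation of Section \ref{Section 1} (Hamiltonian, adjoint BSDEs, ansatz \eqref{ansatz-tree}, Riccati system) together with Theorem \ref{theorem: 1}. What you add --- the explicit sufficiency/verification step via convexity and the per-player deviation argument, admissibility of $\hat\alpha^{n,k}$, and the truncation argument for well-posedness of the infinite forward system --- are details the paper leaves implicit rather than a different method.
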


\section{Conclusion}\label{conclusion}

We studied a linear-quadratic stochastic differential game on a random directed chain network by assuming the interaction between every two neighbors exists with a probability $p$. We constructed an open-loop Nash equilibria in the case of infinite chain and computed the stationary solution explicitly, named Catalan functions. The equilibrium is characterized by interactions with all the players in one direction of the chain weighted by Catalan functions and the probability of interaction $p$. The asymptotic variance of a player's state converges to a finite limit depending on $p$ in the infinite time limit, which is different from the behavior of the nearest neighbor dynamics discussed in Detering, Fouque \& Ichiba \cite{Nils-JP-Ichiba2018DirectedChain}. In the particular case with the probability of interaction equal to $1$, we obtain the deterministic directed chain structure studied in Feng, Fouque \& Ichiba \cite{fengFouqueIchiba2020linearquadratic}. The random directed game model is extended to games on a random two-sided directed chain structure and a random tree structure.

\appendix
\section{Appendix}\label{Appendix}

\subsection{Stationary Solution of  the Riccati System \eqref{random chain riccati-1} }\label{moment_generating_method}
By taking $T\to \infty$ and assuming $\epsilon=1$, the constant solution of the moment generating function \eqref{Sol S_t} satisfying $\Dot{S}_t(z)=0$ is $S(z)=\sqrt{p(1-z)}$. We can then find constant solutions for $\phi$ functions by taking Taylor expansion and comparing it with $S(z)=\sum_{k=0}^\infty z^k\ \phi^{(k)}$, because 
\begin{equation*}
S(z)=\sqrt{p(1-z)}=\sqrt{p}\,\sqrt{1-z}=\sqrt{p}\,\sum\limits_{k=0}^\infty  \binom{\frac{1}{2}}{k} \Big(-z\Big)^k 
=\sqrt{p}-\frac{\sqrt{p}}{2} z-\sqrt{p}\sum\limits_{k=2}^\infty \frac{(2k-3)!! }{2^k k!} z^k. 
\end{equation*}

\subsection{Proof of Proposition \ref{sol_markovc_random}} \label{appendix_CatalanMarkovChainRandom}
We have the results: $q_0=-1$, $q_1=\dfrac{1}{2}$, $\sum\limits_{j=0}^k q_jq_{k-j}=0$ for $k\geq 2$. Then, it is easily seen that $\,(\sqrt{p}\, \mathbf{Q})^{2} \, =p\, ( I - B)\,$ with $\, B\,$ having $\,1\,$'s on the upper second diagonal and $\,0\,$'s elsewhere, i.e., 
\[
(\sqrt{p}\,\mathbf{Q})^{2} \, =\, \left( \begin{array}{ccccc} 
p & -p &  0 & \cdots & \\
0 & p & -p &  \ddots &  \\
& \ddots & \ddots &\ddots  & \\
\end{array} 
\right) \, = \, - p\,J_{\infty} (-1)  \, , \quad J_{\infty} (\lambda) \, :=\,  \left( \begin{array}{ccccc} 
\lambda & 1 &  0 & \cdots & \\
0 & \lambda & 1 &  \ddots &  \\
& \ddots & \ddots &\ddots  & \\
\end{array} 
\right) \, . 
\]
Here, $\, J_{\infty}(\lambda) \,$ is the infinite Jordan block matrix with diagonal components $\,\lambda\,$. 

The matrix exponential of $\, \sqrt{p}\,\mathbf{Q} t\,$, $\, t \ge 0 \,$, is written formally as
\[
\exp ( \sqrt{p}\,\mathbf{Q} t ) \, =\,  F(-p\,\mathbf{Q}^{2} t^{2}) \, =\,  F ( J_{\infty} (-1) \cdot p\,t^{2} ) \, , \, \, t \ge 0 \, ,  \quad F(x) \, :=\, \exp ( - \sqrt{ - x } )\, ,  \, \, x \in \mathbb C \, . 
\]
Since a smooth function of a Jordan block matrix can be expressed as 
\[
F(J_{\infty} (\lambda) ) \, =\, F(\lambda I+B)=\,\sum\limits_{k=0}^\infty \frac{F^{(k)}(\lambda)}{k!}\, B^k =\, \left( \begin{array}{cccccc} 
F(\lambda) & F^{(1)}(\lambda) & \frac{\,F^{(2)}(\lambda) \,}{\,2!\,} & \cdots & \frac{\,F^{(k)}(\lambda) \,}{\,k!\,} & \cdots \\
& \ddots & \ddots & \ddots & & \ddots \\
 & & \ddots & \ddots & \ddots & \\ 
\end{array} \right ) \, ,
\]
we get
\[
\exp (\sqrt{p}\,\mathbf{Q} t )=\, F(J(-\infty)\cdot p\,t^2)=\, F((-I+B)\cdot p\,t^2)=\,\sum\limits_{k=0}^\infty \frac{F^{(k)}(-pt^2)}{k!}\, (B\,pt^2)^k=\,\sum\limits_{k=0}^\infty \frac{p^k\,t^{2k}F^{(k)}(-pt^2)}{k!}\, B^k  .
\]
The $\,(j,k)\,$-element of $\, \exp ( \sqrt{p}\,\mathbf{Q} t ) \,$ is formally given by 
\[
(\exp (\sqrt{p}\,\mathbf{Q} t ))_{j,k} \, =\,  \frac{p^{k-j}\,t^{2(k-j)} \cdot F^{(k-j)}(-pt^{2}) \,}{\, (k-j)!\,} \,  , \quad j \le k \, , \, \, \text{ where } \, \, F^{(k)} (x) \, :=\, \frac{\,{\mathrm d}^{k}  F \,}{\,{\mathrm d} x^{k}\,}(x)  \, ; \quad x > 0 \,, \, \, k \in \mathbb N \,  ,   
\]
and $\, (\exp ( \sqrt{p}\,\mathbf{Q} t))_{j,k} \, =\,  0 \,$, $\, j > k \,$ for $\, t \ge 0 \,$. Here the $\,k\,$-th derivative $\, F^{(k)}(x) \,$ of $\, F(\cdot) \,$ can be written as $\,F^{(k)}(x) \, =\,  \rho_{k}(x) e^{ - \sqrt{-x}}\,$, where $\, \rho_{k}(x) \,$ satisfies the recursive equation 
\[
\rho_{k+1}(x) \, =\, \rho^{\prime}_{k}(x) +\frac{\,\rho_{k}(x)\,}{\,2 \sqrt{ - x} \,} \, ; \quad k \ge 0 \, , \,  
\]
with $\, \rho_{0}(x) \, =\,  1 \, $, $\, x \in \mathbb C \,$. 
By mathematical induction, we may verify
\begin{eqnarray}
\rho_k(x) 
                &=&\frac{1}{2^k} \sum\limits_{j=k}^{2k-1}\, \frac{(j-1)!}{(2j-2k)!!(2k-j-1)!} \, (-x)^{\,-\frac{j}{2}}, \quad 
                k\geq 1. 
\end{eqnarray}

Therefore, substituting them into \eqref{solution_mc_random}, we obtain the formula of Gaussian process.

Next, it follows from (\ref{solution_mc_random}) that for $\, t \ge 0 \,$, the variance of the Gaussian process $X_{\cdot}^{i}$, $i \ge 1$ is given by  
\begin{equation} \label{eq: VarX_random}
\begin{split}
\text{Var} ( X_{t}^{i}) = \text{Var} ( X_{t}^{1}) \, &=\, \text{Var} \Big(  \sum_{j=1}^{\infty} \int^{t}_{0}  \frac{p^{j-1}\,(t-s)^{2(j-1)} \,}{\,(j-1)!\,} F^{(j-1)}(-p(t-s)^{2}) {\mathrm d} W_{s}^{j} \Big)  
\\
\, &=\,  \sum_{j=0}^{\infty} \int^{t}_{0} \frac{p^{2j}\,(t-s)^{4j}\,}{\,(j!)^{2} \,} \lvert  \rho_{j} (- p(t-s)^{2}) \rvert^{2} e^{-2\sqrt{p}(t-s)}{\mathrm d} s.
\end{split}
\end{equation}
Since it can be shown that
\begin{equation} \label{eq: rho-k-nu2}
\rho_j(-\nu^2)=\frac{\,1\,}{\,2^{j} \nu^{j}\,}\cdot \sqrt{\frac{\,2 \nu \,}{\,\pi \,} } \cdot  e^{\nu} \cdot K_{j-(1/2)} (\nu ) \, ; \quad j \ge 1 \, ,  
\end{equation}
where $K_n(x)$ is the modified Bessel function of the second kind defined by 
\[
K_{n}(x)\, =\,  \int^{\infty}_{0} e^{-x \cosh t} \cosh (nt ) {\mathrm d} t \, ; \quad n > -1 ,  x > 0 .  
\]

Then substituting \eqref{eq: rho-k-nu2} into \eqref{eq: VarX_random} and using the change of variables, we obtain 
\begin{equation*} 
\begin{split}
\text{\rm Var} ( X_{t}^{1}) 
&=\dfrac{1}{\sqrt{p}}\, \sum_{k=1}^{\infty}  \int^{\sqrt{p}\,t}_{0} \frac{\,2\,}{\,\pi\,}  \frac{\,\nu^{2k+1}\,}{\,(k!)^{2}\, 4^{k}\,} \big( K _{k-(1/2)}(\nu) \big)^{2} {\mathrm d} \nu + \frac{\,1 - e^{-2\sqrt{p}t}\,}{\,2\sqrt{p}\,}; \quad t \ge 0 . 
\end{split}
\end{equation*}

Using the following identities from the special functions
\begin{equation*}
\begin{split} 
&\int_0^\infty t^{\alpha-1}(K_\nu(t))^2 dt= \frac{\sqrt{\pi}}{4\Gamma ((\alpha+1)/2 )} \Gamma\Big(\frac{\alpha}{2}\Big)\Gamma\Big(\frac{\alpha}{2}-\nu\Big)\Gamma \Big(\frac{\alpha}{2}+\nu\Big),\\
\\
&\frac{\sqrt{2}}{4}x \sqrt{x^2-\sqrt{x^4-16}}=
\sum\limits_{k=0}^\infty {4k \choose 2k}\frac{1}{2k+1}\frac{1}{x^{4k}},\quad \text{for } x\geq 2, \\
\end{split} 
\end{equation*}
we obtain the limit of variance of $X_{t}^{1}$, as $\,t \to \infty\,$, i.e.,  
\begin{equation*}
\begin{split}
\lim_{t\to \infty} \text{Var} ( X_{t}^{1}) =& \frac{\,1\,}{\,2\sqrt{p}\,} + \dfrac{1}{\sqrt{p}}\,\sum_{k=1}^{\infty} \int^{\infty}_{0} \frac{\,2\,  s^{2k+1}\,}{\,\pi (k!)^{2} 4^{k}\,} \cdot [ K_{k-(1/2)}(s)]^{2} {\mathrm d} s 
=\, \frac{\,1\,}{\,2\sqrt{p}\,} + \dfrac{1}{\sqrt{p}}\,\sum_{k=1}^{\infty} \frac{\,2\,}{\,\pi \, (k!)^{2} 4^{k}\,} \int^{\infty}_{0} s^{2k+1} [ K_{k-(1/2)}(s)]^{2}{\mathrm d} s \,\\
\, \\
&=\, \frac{\,1\,}{\,2\sqrt{p}\,} + \dfrac{1}{\sqrt{p}}\,\sum_{k=1}^{\infty} \frac{\,2 \,}{\,\pi (k!)^{2} 4^{k}\,} \cdot \frac{\, \pi  \, \Gamma ( k + 1) \,  \Gamma ( 2k + (1/2)) \,}{\, 8\,  \Gamma ( k + (3/2)) \,}
=\, \frac{1}{2\,\sqrt{p}} \sum_{k=0}^{\infty}   {4k \choose 2k} \frac{1}{2k+1} \frac{1}{2^{4k}} \,\\
\, &=\,  \frac{\,1\,}{\,2\sqrt{p}\,}\, \cdot \, \frac{\sqrt{2}}{4}2\sqrt{2^2-0}=\frac{1}{\sqrt{2p}}.  
\end{split}
\end{equation*}

\subsection{Proof of Proposition \ref{sol_markovctwo}} \label{appendix_CatalanMarkovChainTwo}
We assume $\varepsilon=1$, $p_k=- {\bm \phi}^{k}\,=\,  \lim_{T \to \infty} \varphi^{k}_{t}$. According to the equations (\ref{reduced ODE}), for the fully directed two-sided chain and , we have: 
$\sum\limits_{k=-\infty}^\infty p_k\,p_{-k}=1,\sum\limits_{k=-\infty}^\infty p_k\,p_{1-k}=-p,\sum\limits_{k=-\infty}^\infty p_k\,p_{-1-k}=-(1-p), \sum\limits_{k=-\infty}^\infty p_k\,p_{j-k}=0$ for other $j$. Then it is easily seen that $\, \mathbf{Q}^{2} \, =\,  I - (pB^*+(1-p)B_*)\,$ with $\, B^*\,$ having $\,1\,$'s on the upper second diagonal and $\,0\,$'s elsewhere, and $\, B_*\,$ having $\,1\,$'s on the lower second diagonal and $\,0\,$'s elsewhere i.e., 
\[
\mathbf{Q}^2\,=\left( \begin{array}{ccccccc} 
 \ddots & \ddots &\ddots  & \ddots&\ddots  & \ddots & \ddots\\
\ddots & -(1-p) & 1 & -p &  0 &\ddots&\ddots\\
\ddots & 0 & -(1-p) &  1 & -p &  0 &\ddots \\
\ddots &  \ddots  & 0 &-(1-p) &  1 & -p & \ddots\\
 \ddots & \ddots &\ddots  & \ddots&\ddots  & \ddots & \ddots\\
\end{array} 
\right)\,,
\]
\[
B^*=\left( \begin{array}{ccccccc} 
 \ddots & \ddots &\ddots  & \ddots&\ddots  & \ddots & \ddots\\
\ddots & 0 & 0 & 1 &  0 &\ddots&\ddots\\
\ddots & 0 & 0 &  0 & 1 &  0 &\ddots \\
\ddots &  \ddots  & 0 &0 &  0 & 1 & \ddots\\
 \ddots & \ddots &\ddots  & \ddots&\ddots  & \ddots & \ddots\\
\end{array} 
\right) \, ,\quad
B_*=\left( \begin{array}{ccccccc} 
 \ddots & \ddots &\ddots  & \ddots&\ddots  & \ddots & \ddots\\
\ddots & 1 & 0 & 0 &  0 &\ddots&\ddots\\
\ddots & 0 & 1 &  0 & 0 &  0 &\ddots \\
\ddots &  \ddots  & 0 &1 &  0 & 0 & \ddots\\
 \ddots & \ddots &\ddots  & \ddots&\ddots  & \ddots & \ddots\\
\end{array} 
\right) \, . 
\]

If we look at the power of $pB^*+(1-p)B_*$:
\[
\begin{split}
& pB^*+(1-p)B_*=\left( \begin{array}{ccccccc} 
 \ddots & \ddots &\ddots  & \ddots&\ddots  & \ddots & \ddots\\
\ddots & 0 & 1-p & 0 & p &  0 &\ddots \\
 \ddots & \ddots &\ddots  & \ddots&\ddots  & \ddots & \ddots\\
\end{array} 
\right) \, ;\\
&(pB^*+(1-p)B_*)^2=\left( \begin{array}{ccccccc} 
 \ddots & \ddots &\ddots  & \ddots&\ddots  & \ddots & \ddots\\
\ddots & (1-p)^2 & 0 & 2p(1-p) & 0 &  p^2 &\ddots \\
 \ddots & \ddots &\ddots  & \ddots&\ddots  & \ddots & \ddots\\
\end{array} 
\right) \, ;\\
& (pB^*+(1-p)B_*)^3=\left( \begin{array}{ccccccccc} 
 \ddots & \ddots &\ddots  & \ddots&\ddots  & \ddots & \ddots& \ddots\\
\ddots & (1-p)^3 & 0 & 3p(1-p)^2 & 0 &  3p^2(1-p) & 0 & p^3 &\ddots \\
 \ddots & \ddots &\ddots  & \ddots&\ddots  & \ddots & \ddots& \ddots\\
\end{array} 
\right);\\
&(pB^*+(1-p)B_*)^4=\left( \begin{array}{ccccccccccc} 
 \ddots & \ddots &\ddots  & \ddots&\ddots  & \ddots & \ddots& \ddots & \ddots& \ddots & \ddots\\
\ddots & (1-p)^4 & 0 & 4p(1-p)^3 & 0 &  6p^2(1-p)^2 & 0 & 4p^3(1-p) &0&p^4 &\ddots \\
 \ddots & \ddots &\ddots  & \ddots&\ddots  & \ddots & \ddots& \ddots & \ddots& \ddots & \ddots\\
\end{array} 
\right) \, ;\\
&\quad \cdots\cdots\cdots\cdots
\end{split}
\]

We find the diagonal increases following the binomial expansion and we have formulas to generalize the result:
\[
\left\{
\begin{split}
k \text{ even}: & (pB^*+(1-p)B_*)^k=\left( \begin{array}{ccccccc} 
 \ddots & \ddots &\ddots  & \ddots&\ddots  & \ddots & \ddots\\
\ddots & \binom{k}{\frac{k}{2}-1}\, p^{\frac{k}{2}-1}(1-p)^{\frac{k}{2}+1} & 0 &  \binom{k}{\frac{k}{2}}\,p^{\frac{k}{2}}(1-p)^{\frac{k}{2}}& 0 &  \binom{k}{\frac{k}{2}+1}\,p^{\frac{k}{2}+1}(1-p)^{\frac{k}{2}-1} &\ddots \\
 \ddots & \ddots &\ddots  & \ddots&\ddots  & \ddots & \ddots\\
\end{array} 
\right)\\
k \text{ odd}: & (pB^*+(1-p)B_*)^k=\left( \begin{array}{ccccccc} 
 \ddots & \ddots &\ddots  & \ddots&\ddots  & \ddots & \ddots\\
\ddots & 0 & \binom{k}{\frac{k+1}{2}-1}\,p^{\frac{k+1}{2}-1}(1-p)^{\frac{k-1}{2}+1} &  0& \binom{k}{\frac{k+1}{2}}\,p^{\frac{k+1}{2}}(1-p)^{\frac{k-1}{2}} &  0 &\ddots \\
 \ddots & \ddots &\ddots  & \ddots&\ddots  & \ddots & \ddots\\
\end{array} 
\right)
\end{split}
\right.
\]
i.e.
\[\begin{split}
& k \text{ even}: ((pB^*+(1-p)B_*)^k)_{i,j}=\left\{\begin{split}
&\quad \binom{k}{\frac{k}{2}+m}\,\,p^{\frac{k}{2}+m}(1-p)^{\frac{k}{2}-m} ,\quad j=i+2m,\text{ and } -\frac{k}{2}\leq m\leq \frac{k}{2}\quad m\in\mathbb{Z};\\
&\quad \quad 0,\quad \quad\quad \quad\quad \quad\quad \quad \quad \quad\quad \text{otherwise}. 
\end{split}\right.\\
\\
&k \text{ odd}: ((pB^*+(1-p)B_*)^k)_{i,j}=\left\{\begin{split}
&\quad \binom{k}{\frac{k+1}{2}+m}\,\,p^{\frac{k+1}{2}+m}(1-p)^{\frac{k-1}{2}-m} ,\quad j=i+2m+1,\text{ and } -\frac{k+1}{2}\leq m\leq \frac{k-1}{2};\\
&\quad \quad 0,\quad \quad\quad \quad\quad \quad\quad \quad \quad \quad\quad \text{otherwise}. 
\end{split}\right.
\end{split}
\]

The matrix exponential of $\, \mathbf{Q} t\,$, $\, t \ge 0 \,$ is written formally by 
\[
\exp ( \mathbf{Q} t ) \, =\,  F(-\mathbf{Q}^{2} t^{2}) \, , t \ge 0 \, ,  \quad F(x) \, :=\, \exp ( - \sqrt{ - x } )\, ,  \, \, x \in \mathbb C \, . 
\]
Since a smooth function can be expressed as 
\[
F(\lambda I+B)=\,\sum\limits_{k=0}^\infty \frac{F^{(k)}(\lambda)}{k!}\, B^k =\sum\limits_{k\,\, odd} \frac{F^{(k)}(\lambda)}{k!}\, B^k + \sum\limits_{k\,\, even} \frac{F^{(k)}(\lambda)}{k!}\, B^k.
\]

So 
\[
\exp (\mathbf{Q} t )=\, F((-I+pB^*+(1-p)B_*)t^2)=\,\sum\limits_{k=0}^\infty \frac{F^{(k)}(-t^2)}{k!}\, \big((pB^*+(1-p)B_*)t^2\big)^k=\,\sum\limits_{k=0}^\infty \frac{t^{2k}F^{(k)}(-t^2)}{k!}\, (pB^*+(1-p)B_*)^k .
\]
The $\,(i,j)\,$-element of $\, \exp ( \mathbf{Q} t ) \,$, is formally given by 
\[
(\exp (\mathbf{Q} t ))_{i,j} \, =\left\{\begin{split}
&\sum\limits_{k\,\, even} \,\frac{t^{2k} F^{(k)}(-t^2)}{k!}\,\binom{k}{\frac{k}{2}+m} \,p^{\frac{k}{2}+m} (1-p)^{\frac{k}{2}-m}\cdot \mathbbm{1}_{-\frac{k}{2}\leq m\leq \frac{k}{2}} , &\quad j = i+2m, \,m\in\mathbb{Z} ,\\
&\sum\limits_{k\,\, odd}  \,\frac{t^{2k} F^{(k)}(-t^2)}{k!}\,\binom{k}{\frac{k+1}{2}+m}\,p^{\frac{k+1}{2}+m}(1-p)^{\frac{k-1}{2}-m}\cdot \mathbbm{1}_{-\frac{k+1}{2}\leq m\leq \frac{k-1}{2}}, &\quad  j=i+2m+1, \,m\in\mathbb{Z} .
\end{split}  
\right.
\]
\[=
\left\{
\begin{split}
&\sum\limits_{\ell=0}^\infty  \,\frac{t^{4\ell} F^{(2\ell)}(-t^2)}{(2\ell)!}\,\binom{2\ell}{\ell+m}\,p^{\ell+m}(1-p)^{\ell-m}\cdot \mathbbm{1}_{-\ell\leq m\leq \ell}, &\quad j = i+2m,\,m\in\mathbb{Z},\\
&\sum\limits_{\ell=0}^\infty  \,\frac{t^{4\ell+2} F^{(2\ell+1)}(-t^2)}{(2\ell+1)!}\, \binom{2\ell+1}{\ell+1+m}\,p^{\ell+1+m}(1-p)^{\ell-m}\cdot \mathbbm{1}_{-(\ell+1)\leq m\leq \ell}, &\quad j = i+2m+1,\,m\in\mathbb{Z}.
\end{split}
\right.
\]

$\text{ where } \, \, F^{(k)} (x) \, :=\, \frac{\,{\mathrm d}^{k}  F \,}{\,{\mathrm d} x^{k}\,}(x)  \, ; \quad x > 0 \,, \, \, k \in \mathbb N \,  . $
Here the $\,k\,$-th derivative $\, F^{(k)}(x) \,$ of $\, F(\cdot) \,$ can be written as $\,F^{(k)}(x) \, =\,  \rho_{k}(x) e^{ - \sqrt{-x}}\,$, where 
\[
\begin{split}
\rho_k(x)&=\frac{1}{2^k} \sum\limits_{j=k}^{2k-1}\, \frac{(j-1)!}{(2j-2k)!!(2k-j-1)!} \, (-x)^{\,-\frac{j}{2}},\quad \textit{for} \quad k\geq 1,
\end{split}
\]
and $\rho_{0}(x) \, =\,  1$ for $x \le 0 $.  

Thus the Gaussian process $\, X_t^0 \,$,  $\, t \ge 0 \,$, corresponding to the Markov chain, is 
\begin{equation}\label{gaussian}
\begin{split}
{X}_t^0 \, : & =\, \sum\limits_{j=-\infty}^{\infty}\int^{t}_{0}   (\exp ( Q (t-s)))_{0,j} {\mathrm d} {W}_s^j \, \\
&=\, \sum\limits_{j\,\, even}\int^{t}_{0}   (\exp ( Q (t-s)))_{0,j} {\mathrm d} {W}_s^j \, +\, \sum\limits_{j\,\, odd}\int^{t}_{0}   (\exp ( Q (t-s)))_{0,j} {\mathrm d} {W}_s^j \, \\
&=\sum\limits_{m=-\infty}^\infty \int^{t}_{0}\sum\limits_{\ell=0}^\infty  \,\frac{(t-s)^{4\ell} F^{(2\ell)}(-(t-s)^2)}{(2\ell)!}\,\binom{2\ell}{\ell+m}\,p^{\ell+m}(1-p)^{\ell-m}\cdot \mathbbm{1}_{-\ell\leq m\leq \ell}\,{\mathrm d} {W}_s^{2m}\\
&+\sum\limits_{m=-\infty}^\infty \int^{t}_{0}\sum\limits_{\ell=0}^\infty  \,\frac{(t-s)^{4\ell+2} F^{(2\ell+1)}(-(t-s)^2)}{(2\ell+1)!}\, \binom{2\ell+1}{\ell+1+m}\,p^{\ell+1+m}(1-p)^{\ell-m}\cdot \mathbbm{1}_{-(\ell+1)\leq m\leq \ell}\, {\mathrm d} {W}_s^{2m+1}\\
\\
&=\sum\limits_{\ell=0}^\infty\sum\limits_{m=-\ell}^\ell \,\int^{t}_{0}\frac{(t-s)^{4\ell} F^{(2\ell)}(-(t-s)^2)}{(2\ell)!}\,\binom{2\ell}{\ell+m}\,p^{\ell+m}(1-p)^{\ell-m}{\mathrm d} {W}_s^{2m}\\
&+\sum\limits_{\ell=0}^\infty \sum\limits_{m=-(\ell+1)}^\ell\,\int^{t}_{0} \frac{(t-s)^{4\ell+2} F^{(2\ell+1)}(-(t-s)^2)}{(2\ell+1)!}\, \binom{2\ell+1}{\ell+1+m}\,p^{\ell+1+m}(1-p)^{\ell-m}  {\mathrm d} {W}_s^{2m+1}\,,
\end{split}
\end{equation}
where $\, {W}_.^k (\cdot) \,$, $\,  k \in \mathbb Z\,$ are independent standard Brownian motions. 

Thus, the variance is given by 
\begin{equation} 
\begin{split}
\text{Var} ( X_{t}^{0}) \,
&= \text{Var} \big(\sum\limits_{\ell=0}^\infty\sum\limits_{m=-\ell}^\ell \int^{t}_{0} \frac{(t-s)^{4\ell} F^{(2\ell)}(-(t-s)^2)}{(2\ell)!}\,\binom{2\ell}{\ell+m}\,p^{\ell+m}(1-p)^{\ell-m}\,{\mathrm d} {W}_s^{2m}\big)\\
&+ \text{Var}  \big(\sum\limits_{\ell=0}^\infty \sum\limits_{m=-(\ell+1)}^\ell \int^{t}_{0} \frac{(t-s)^{4\ell+2} F^{(2\ell+1)}(-(t-s)^2)}{(2\ell+1)!}\, \binom{2\ell+1}{\ell+1+m}\,p^{\ell+1+m}(1-p)^{\ell-m} \, {\mathrm d} {W}_s^{2m+1}\big).
\end{split}
\end{equation}

\bibliographystyle{acm.bst}

\end{document}